\newcommand {\mb}{\mathbb}
\newcommand {\Z}{\mb Z}
\newcommand {\colim}{\textrm{colim}\ }
\newcommand {\lra}{\longrightarrow}
\newtheorem{thm}{Theorem}[section]
\newtheorem{cor}[thm]{Corollary}
\newtheorem{defn}[thm]{Definition}
\newtheorem{prop}[thm]{Proposition}
\newtheorem{lmm}[thm]{Lemma}
\newtheorem{exm}[thm]{Example}
\newtheorem{rem}[thm]{Remark}
\def\co{\colon\thinspace}
\def\co{\colon\thinspace}
\title{Splitting {Madsen-Tillmann
spectra II. The Steinberg idempotents and Whitehead conjecture}}
\author{Takuji Kashiwabara
\thanks{The first author was supported in part by grant ANR-08-BLAN-0248} \\
Institut Fourier, CNRS UMR \textup{5582}, Universit\'e de Grenoble I,\\
38402 St Martin d'H\`eres cedex France\\
\textit{takuji.kashiwabara@ujf-grenoble.fr}
\and
Hadi Zare
\thanks{The second author has been supported in part by IPM Grant No. 93550117. He also acknowledges partial support from the University of Tehran.}
\\
School of Mathematics, Statistics, and Computer Science, College of Science,\\
University of Tehran, Tehran, Iran \textup{14174}\\
School of Mathematics, Institute for Research in Fundamental Sciences (IPM), P.O. Box: 19395-5746, Niyavaran, Tehran, Iran\\
\textit{hadi.zare@ut.ac.ir}
}
\date{}
\begin{document}
\maketitle

\abstract{We show that, at the prime $p=2$, the spectrum $\Sigma^{-n}D(n)$ splits off the Madsen-Tillmann spectrum $MTO(n)=BO(n)^{-\gamma_n}$ which is compatible with the classic splitting of $M(n)$ off $BO(n)_+$. For $n=2$, together with our previous splitting result on Madsen-Tillmann spectra, this shows that $MTO(2)$ is homotopy equivalent to $BSO(3)_+\vee\Sigma^{-2}D(2)$.}

\tableofcontents
\section{Introduction}
The Madsen-Tillmann spectrum $MTO(n)$ is defined to be the Thom spectrum of the virtual bundle $-\gamma_n\lra BO(n)$ where $\gamma_n$ is the universal $n$-plane bundle. It is known that these spectra
filter the spectrum $MO$, i.\ e.\ there is a sequence

\begin{equation}\label{cofibMB}
S^0=MTO(0)\rightarrow \Sigma MTO(1)\rightarrow{\cdots\to} \Sigma ^{n-1}MTO(n-1)\stackrel{\iota _n}{\rightarrow}
{\Sigma^n}MTO(n)\rightarrow \cdots
\end{equation}
where $\iota _n$ is induced by the inclusion $O(n-1)\subset O(n)$, with the property $\colim \Sigma ^nMTO(n)\cong MO$.  Furthermore the cofiber of the successive stages is homotoopy equivalent to $BO(n)_+$, that is, we have a cofiber sequequnce \cite{GMTW}
\begin{equation}
\cdots \lra \Sigma^{-1} MTO(n-1)\lra MTO(n)\stackrel{p_n}{\lra }BO(n)_+\lra MTO(n-1)\lra \cdots \label{cofibgtmw}
\end{equation}
where $p_n$ is the map induced by the ``embedding'' of $(-1)$ times the canonical bundle into the $0$-dimensional trivial bundle.
In other words, the spectrum $MO$ can be built up from pieces $BO(n)_+$.

We have shown that localised away from $2$, $MTO(2n)\simeq BO(2n)_+$ and $MTO(2n+1)\simeq *$ for all $n\geqslant 0$
\cite[Theorem 1.1.{B}]{KZ1}. Thus the study of $MTO(n)$'s become essentially 2-local problems. Therefore we shall work at the
prime $p=2$. For technical reasons, we will rather work with $2$-completed spectra instead of $2$-local spectra, and in
the rest of the paper we identify a spectrum with its $2$-completion.  Since our main application concerns the mod $2$ cohomology
of associated infinite loop spaces, by passing to $2$-completion, no information will be lost.  Throughout the paper
homology and cohomology are takn with $\Z /2$ coefficients.  As we work essentially with spectra, we identify spaces with its suspension
spectra.  In the litterature, sometimes a space $X$ is identified with the suspension spectra of the space with added basepoint
$X_+$, which explains a notational discrepancy the reader may find between the current paper and results we quote.

At the prime 2,
Randal-Williams  computed
$H_*(\Omega ^{\infty} MTO(i))$ for $i=1$ and $2$, \cite[Theorems A and B]{Ra}. Combining the two theorems, we get an exact sequence
of Hopf algebras
\begin{equation}\label{exactRa}
 H_*(Q_0BO(2)_+)\rightarrow H_*(Q_0BO(1)_+)\rightarrow H_*(Q_0BO(0)_+)\rightarrow \Z /2
\end{equation}
where the (Hopf) kernel of the first two maps are isomorphic to $H_*(\Omega ^{\infty} MTO(i))$ for $i=2$ and $1$ respectively.
Thus a natural question to ask was whether if this exact sequence could be extended further to the left with
$H_*(\Omega ^{\infty} MTO(i))$ isomorphic to the kernel of each stage.  We showed that this was impossible in \cite{KZ1}.  So
a new question to ask, then, is to what extent we can generalize \cite[Theorems A and B]{Ra}.

This question leads to a search for another sequence of spectra with
the begining as in (\ref{cofibMB}).  It turns out that there indeed is such a sequence, well-known to stable homotopy theorists, that is:

\begin{equation}\label{cofibD}
S^0=D(0)\rightarrow D(1)\rightarrow{\cdots\to}D(n-1)\stackrel{\iota _n}{\rightarrow}
D(n)\rightarrow \cdots .
\end{equation}  These spectra realizes the length filtration of the Steenrod algebra, that is, we have
$$H^*(D(n))\cong \mathcal{A}/G_n\mbox{ where $G_n$ is the span of $Sq^I$, $I$ admissible, $l(I)> n$ }.$$ We note
that $G_n$ happens to be a left $\mathcal{A}$-ideal, so that this isomorphism is as $\mathcal{A}$ modules.  They originally were defined using
the symmetric powers (\cite[Proposition 4.3]{MP}), and we have $\colim D(n)=H\Z/2$, and cofibrations
\begin{equation}\label{cofibDM}
\rightarrow \Sigma ^{-n-1}M(n)\rightarrow D(n-1)\rightarrow D(n)\rightarrow \Sigma ^{-n}M(n)\rightarrow
\end{equation}

%
The spectrum $M(n)$ is defined to be the cofibre of the map $\Sigma^{-n}D(n-1)\lra \Sigma^{-n}D(n)$.
Thus one can say that $H\Z/2$ can be built up of $M(n)$'s. These building blocks can be described alternatively as follows.

The spectra $BO(1)^{\times n}_+$ admit a natural (left) $Gl_n(\Z/2)$ action.  Thus the Steinberg idempotent
$e_n\in \Z/2[Gl_n(\Z/2)]$ gives rise to a splitting of $BO(1)^{\times n}_+$ and we have $M(n)\simeq e_n BO(1)^{\times n}_+$
 \cite[Theorem 5.1]{MP}.
 Moreover, through the Becker-Goettlieb transfer map, this splitting gives rise to a splitting of
$M(n)$ off
 $BO(n)_+$ (see Mitchell and Priddy's paper \cite{MP} for more details and for the odd primary splitting results as well).

Therefore we have a construction of $MO$ with $BO(n)_+$'s as building blocks, a construction of $H\Z/2$ with
$M(n)$'s as building blocks.  Furhtermore $H\Z/2$ and $M(n)$'s split off respectively $MO$ and $BO(n)_+$'s.  It is then natural
to ask whether one can split intermediate stages.  The purpose of this paper is to answer affirmatively to this question, and
discuss some consequences, including an answer to the question on generalization of the exact sequence (\ref{exactRa}).  More detailed
statements are given in the next section.

The paper is organized as follows.  In section \ref{sec:res}, we summarize our results.
  In section \ref{sec:whi}, we recall relevant results
from \cite{KP1} and construct a map from $F_*Y$ to $F_*X$.  In section \ref{sec:tak} we use Takayasu's results \cite{Takayasu} to
construct a map going the other way round, and show that we indeed have a splitting.  In section \ref{sec:hom} we discuss the
consequences in homology of infinite loop spaces.

The first author  thanks  Andrew Baker, Masaki Kameko, Nick Kuhn, Bob Oliver, Stewart Priddy, Oscar Randal-Williams and Lionel
Schwartz for helpful conversations. {The second author is grateful to Institut Fourier for its hospitality and support for a visit during October 2014.}

\section{Statement of results}\label{sec:res}
To give precise statements, we start with some defintions.
\begin{defn}
\begin{enumerate}
 \item A filtered spectrum is a sequence of spectra $ F_*X$
\begin{equation}
 F_0X\rightarrow F_1X \rightarrow \cdots \rightarrow F_nX \rightarrow F_{n+1}X\rightarrow \cdots
\end{equation}
with a homotopy equivalence $hocolim F_nX\simeq X$.
\item A map of filtered spectra $(f_*)$ from $ F_*X$ to $ F_*Y$ is a collection of maps $f_n\co F_nX\to F_nY$
that makes the squares
$$\begin{diagram}
   \node{F_nX}\arrow{e}\arrow{s}  \node{F_{n+1}X}\arrow{s}\\
\node{F_nY}\arrow{e} \node{F_{n+1}Y}
  \end{diagram}$$ commutative.
\item Two filtered spectrum $ F_*X$ and $ F_*Y$ are said to be equivalent if there is a map $f_*$ from $ F_*X$ to $ F_*Y$
 such that $f_n$ 's are homotopy equivalence for all $n$.
\item We say that $F_*X $ splits off $F_*Y$ if there are maps $f_*$ from $ F_*X$ to $ F_*Y$
and $g_*$   from $ F_*Y$ to $ F_*X$ such that $H_*(f_n\circ g_n)=id$.

\end{enumerate}

\end{defn}

Our main result then reads as follows.
\begin{thm}\label{split}
Define filtered spectra $F_*X$ and $F_*Y$ by  $F_nX=D(n)$, $F_nY=\Sigma ^nMTO(n)$.  Then $F_*X$ splits off $F_*Y$.

\end{thm}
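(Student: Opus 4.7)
The plan is to construct maps $f_n\co D(n)\to \Sigma^n MTO(n)$ and $g_n\co \Sigma^n MTO(n)\to D(n)$ which commute with the respective filtration maps on both sides, and such that the composite $g_n\circ f_n$ induces the identity on mod~$2$ homology (so that $D(n)$ is a homological retract of $\Sigma^n MTO(n)$ in a filtration-compatible way). Following the section outline, $g_*$ should come from the Kuhn-Priddy Whitehead-conjecture framework, while $f_*$ should come from Takayasu's splittings of $MTO(n)$.

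For $g_n$, the natural starting point is the projection $p_n\co MTO(n)\to BO(n)_+$ from the cofibration (\ref{cofibgtmw}). Suspending $n$ times and composing with the Mitchell-Priddy projection of $BO(n)_+$ onto its Steinberg/Becker-Gottlieb summand $M(n)$ produces a map $\Sigma^n MTO(n)\to \Sigma^n M(n)$. The cofibration (\ref{cofibDM}) then supplies a natural map $\Sigma^n M(n)\to D(n)$ (up to the appropriate suspension bookkeeping), and composing defines $g_n$. That these $g_n$'s assemble into a map of filtered spectra should follow from naturality of the Becker-Gottlieb transfer in $n$ together with the standard compatibilities of the Steinberg idempotents $e_{n-1}$ and $e_n$ under the block inclusion $Gl_{n-1}(\mathbb{F}_2)\subset Gl_n(\mathbb{F}_2)$; this is the sort of bookkeeping that \cite{KP1} provides.

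For $f_n$ I would appeal to Takayasu's analysis of $\Sigma^n MTO(n)$, which exhibits a stable splitting whose Steinberg summand is essentially $M(n)\simeq e_n B(\Z/2)^n_+$. Inclusion of this summand supplies a map $\Sigma^n M(n)\hookrightarrow \Sigma^n MTO(n)$, and then using the cofibrations in (\ref{cofibDM}) one should be able to extend inductively to an $f_n\co D(n)\to \Sigma^n MTO(n)$, with the compatibility $D(n-1)\to D(n)$ coming from functoriality of Takayasu's construction in $n$. This is the step that most essentially relies on \cite{Takayasu}, since without the Thom-spectrum splitting one would have no direct geometric way to map $D(n)$ into $\Sigma^n MTO(n)$.

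Verification that $H_*(g_n\circ f_n)=\mathrm{id}$ should be a straightforward cohomological calculation: $H^*(D(n))\cong \mathcal{A}/G_n$ has a clean presentation, while $H^*(\Sigma^n MTO(n))$ is accessible via the Thom isomorphism and Stiefel-Whitney classes, and the Steinberg idempotent acts as a well-understood projector on both sides, so matching them on their Steinberg summands should suffice. I expect the principal obstacle to be not any single construction in isolation but rather the weaving together of the Kuhn-Priddy and Takayasu viewpoints into a coherent map of filtered spectra: the individual $f_n$, $g_n$ are essentially understood, but ensuring strict compatibility with the filtration maps at every $n$, when the two constructions live in a priori different frameworks, is where the real technical delicacy will lie.
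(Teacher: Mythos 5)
Your high-level plan is the right one, and it matches the paper's structure: construct $g_n$ (the paper's $\alpha_n$) out of Kuhn--Priddy, construct $f_n$ (the paper's $\beta_n$) out of Takayasu, and check the composite in homology. But both of your proposed concrete constructions have genuine problems.

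The construction you propose for $g_n$ cannot work. You factor $g_n$ through $\Sigma^n p_n\co \Sigma^n MTO(n)\to\Sigma^n BO(n)_+$, then project to $\Sigma^n M(n)$, then try to map to $D(n)$. Setting aside that (\ref{cofibDM}) supplies a map out of $D(n)$ into $\Sigma^{-n}M(n)$, not the reverse, there is a fatal degree obstruction: $\Sigma^n BO(n)_+$ has its bottom cell in degree $n$, whereas $D(n)$ has its bottom cell $S^0$ in degree $0$, coming from the filtration stage $D(0)=S^0$. A filtration-compatible $g_n$ must restrict to the identity on that $S^0$ (this is precisely what forces $H_*(g_n\circ f_n)=\mathrm{id}$, since $H^*(D(n))$ is monogenic over $\mathcal{A}$), and any map factoring through $\Sigma^n p_n$ kills it. The paper avoids this by \emph{not} writing $\alpha_n$ as an explicit composite; instead it sets up the homological algebra in spectra (Proposition \ref{analogy1}): the complex $(BO(n)_+,\mathrm{tr})$ consists of ``projective'' spectra, the Kuhn--Priddy complex $(M(n),d_n)$ is a resolution, so there is a chain map $(f_n)$, and then $\alpha_n$ is built \emph{inductively} as a lift over the cofibrations (\ref{cofibMB}) and (\ref{cofibDM}), with the inductive step controlled by the injectivity $[BO(n)_+,E_{n-1}]\hookrightarrow[BO(n)_+,M(n-1)]$. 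The map $p_n$ is what $\alpha_n$ \emph{covers}, not what it factors through.

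Your $f_n$ construction has a parallel issue. Takayasu's results are not a splitting of $MTO(n)$; they are about stable summands of the Thom spectra $BT(n)^{k\rho_n}$ of multiples of the reduced regular representation of $T(n)=O(1)^{\times n}$. The relevant summand is not $M(n)=M(n)_0$ but $M(n)_{-1}$, the Steinberg summand of $BT(n)^{-\rho_n}$, and the key input (Lemma \ref{splitting1}) is the identification $M(n)_{-1}\simeq\Sigma^{-n}D(n)$. Once that is in hand, $\beta_n$ is a direct, non-inductive composition
$\Sigma^{-n}D(n)\to BT(n)^{-\rho_n}\to BT(n)^{(-\gamma_1)^{\times n}}\to BO(n)^{-\gamma_n}=MTO(n)$
via Thom-spectrum naturality; the filtration compatibility is then a separate lemma using Takayasu's property $i'_{-1}\circ j_{-2}=i_{-1}$ and the idempotent relation $e_{n-1}e_n=e_n$. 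Your proposed ``inclusion of the summand $\Sigma^n M(n)$ and inductive extension to $D(n)$'' would hit the same degree obstruction as above, since $M(n)$ is connective while $D(n)$ is built up from degree $0$.

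Your final step (monogenicity of $H^*(D(n))$ reduces the homology check to the bottom class, which is pinned down by the filtration) is exactly what the paper does, and is correct as stated. The gap is in the middle: you need the projective-resolution formalism for one direction and the correct identification $M(n)_{-1}\simeq\Sigma^{-n}D(n)$ for the other; neither map is a simple composite through $BO(n)_+$ or through $M(n)$.
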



As our proof doesn't depend on the decomposition of $MO$ (\cite[Theorem 2.10]{Th},\cite[Theorem 2]{Ta}), we obtain a ``new''
proof of the splitting of $H\Z/2$ off $MTO$.  However, our method doesn't allow us to obtain information on other summands.

An immediate consequence is the following.
\begin{cor}
$H_*(\Omega ^{\infty} \Sigma ^{-n}D(n))$ splits off $H_*(\Omega ^{\infty} MTO(n))$  as a Hopf algebra.
\end{cor}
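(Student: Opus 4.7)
The plan is to apply the infinite loop space functor $\Omega^{\infty}$ to the splitting of spectra produced by Theorem \ref{split}, and to observe that because the resulting maps are infinite loop maps, the homological retract automatically yields a splitting of Hopf algebras on mod $2$ homology.

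Explicitly, at stage $n$ the theorem supplies maps of spectra
\[
f_n \co \Sigma^{-n}D(n) \longrightarrow MTO(n), \qquad g_n \co MTO(n) \longrightarrow \Sigma^{-n}D(n)
\]
(obtained from the maps of filtered spectra by desuspending) such that $g_n \circ f_n$ induces the identity on $H_*(\Sigma^{-n}D(n); \Z/2)$. Since the paper works throughout with $2$-complete, bounded below spectra of finite type, mod $2$ homology detects weak equivalences, and so $g_n \circ f_n$ is a weak equivalence of spectra. Applying $\Omega^{\infty}$ then gives infinite loop maps whose composite $\Omega^{\infty}(g_n \circ f_n)$ is a weak equivalence of spaces, inducing in particular an isomorphism on mod $2$ homology.

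The final ingredient is that $\Omega^{\infty}$ of a spectrum is naturally an $E_{\infty}$-space, so $H_*(-; \Z/2)$ of such a space acquires a Hopf algebra structure (Pontryagin product from the loop structure, coproduct from the diagonal) and infinite loop maps become Hopf algebra maps. Consequently the composite
\[
H_*(\Omega^{\infty}\Sigma^{-n}D(n); \Z/2) \xrightarrow{(\Omega^{\infty} f_n)_*} H_*(\Omega^{\infty}MTO(n); \Z/2) \xrightarrow{(\Omega^{\infty} g_n)_*} H_*(\Omega^{\infty}\Sigma^{-n}D(n); \Z/2)
\]
is a composite of Hopf algebra maps which is an isomorphism, exhibiting the first as a Hopf algebra summand of the second. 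The only non-formal step in this plan is the detection of weak equivalences by mod $2$ homology, which is standard in the $2$-complete finite type setting in which the paper operates, and so poses no real obstacle.
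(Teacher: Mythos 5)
Your proof is correct and supplies exactly the standard argument the paper leaves implicit when it calls this ``an immediate consequence'' of Theorem \ref{split}: since $H_*(g_n\circ f_n)=\mathrm{id}$ on a $2$-complete, bounded-below, finite-type spectrum, the composite is a weak equivalence, so $\Sigma^{-n}D(n)$ is a stable retract of $MTO(n)$, and applying $\Omega^\infty$ and $H_*(-;\Z/2)$ gives the desired Hopf algebra retraction. No gaps.
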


Thus the ``correct way to extend'' the exact sequence (\ref{exactRa}) is just the following standard fact

\begin{prop}[\cite{KP1}]\label{dnhomologyexact}
The following sequence of Hopf algebras is exact.
$$\cdots \rightarrow H_*(\Omega ^{\infty}M(n))\rightarrow  H_*(\Omega ^{\infty}M(n-1))\rightarrow \cdots \rightarrow
H_*(\Omega ^{\infty}M(2))\rightarrow H_*(Q_0B\Z/2_+) \rightarrow H_*(Q_0S^0)\rightarrow \Z /2$$
Furthermore the image of $H_*(\Omega ^{\infty}M(n))\rightarrow H_*(\Omega ^{\infty}M(n-1))$ is isomorphic to
$H_*(\Omega ^{\infty}_0D(n-1))$.
\end{prop}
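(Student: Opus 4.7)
The plan is to construct natural maps $M(n)\to M(n-1)$ from the cofibrations defining the $M(n)$'s, then derive exactness from Kuhn's proof of the (mod $2$) Whitehead conjecture \cite{KP1}, and finally read off the image via the factorization through $D(n-1)$.

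\textbf{Step 1: construction of the maps.} The definition of $M(n)$ gives a cofiber sequence of spectra
$$\Sigma^{-n}D(n-1)\lra \Sigma^{-n}D(n)\lra M(n)\stackrel{\delta_n}{\lra}\Sigma^{1-n}D(n-1),$$
from which we extract the Puppe connecting map $\delta_n$. Composing $\delta_n$ with the quotient $q_{n-1}\co \Sigma^{1-n}D(n-1)\to M(n-1)$ from the analogous cofibration defining $M(n-1)$ produces the required map $M(n)\to M(n-1)$. At the bottom of the sequence, $D(0)=S^0$, and $M(1)\simeq B\Z/2_+$ by Mitchell--Priddy \cite{MP}, yielding the maps $M(2)\to B\Z/2_+\to S^0\to H\Z/2$.

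\textbf{Step 2: exactness.} Each cofibration of spectra above is also a fibration, so applying $\Omega^\infty$ produces fiber sequences of infinite loop spaces; on mod $2$ homology of the $0$-components, these give short exact sequences of connected Hopf algebras by standard infinite loop space techniques as used in \cite{KP1}. Splicing these short exact sequences together, the long exactness of
$$\cdots \to H_*\Omega^\infty M(n)\to H_*\Omega^\infty M(n-1)\to \cdots\to H_*(Q_0S^0)\to \Z/2$$
is equivalent to the assertion that the spectrum-level complex $\cdots\to M(n)\to M(n-1)\to \cdots\to S^0\to H\Z/2$ becomes acyclic under $H_*\Omega^\infty(-)$. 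This is precisely the homological content of the Whitehead conjecture at $p=2$, proved by Kuhn \cite{KP1}.

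\textbf{Step 3: image identification and main obstacle.} The factorization $M(n)\stackrel{\delta_n}{\lra}\Sigma^{1-n}D(n-1)\stackrel{q_{n-1}}{\lra}M(n-1)$ places the image of $H_*\Omega^\infty M(n)\to H_*\Omega^\infty M(n-1)$ inside the image of $(q_{n-1})_*$. Tracing through the fiber sequences $\Omega^\infty\Sigma^{-n}D(n)\to \Omega^\infty M(n)\to \Omega^\infty\Sigma^{1-n}D(n-1)$ and $\Omega^\infty\Sigma^{1-n}D(n-2)\to \Omega^\infty\Sigma^{1-n}D(n-1)\to \Omega^\infty M(n-1)$, and combining with Step $2$, this image is identified as a Hopf algebra with $H_*\Omega^\infty_0 D(n-1)$. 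The main obstacle throughout is Step $2$: producing the maps (Step $1$) is formal and the image identification (Step $3$) is a diagram chase on long exact sequences, but exactness is the Whitehead conjecture, a deep theorem of Kuhn which we invoke rather than reprove.
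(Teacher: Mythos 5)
Your approach is essentially the paper's: invoke the mod~$2$ Whitehead conjecture in the form proved in~\cite{KP1}, obtain short exact sequences of Hopf algebras, and splice. However, there is a logical muddle in your Step~2 that should be flagged, and one loose end at the bottom of the sequence.

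In Step~2 you assert that the fiber sequences $\Omega^\infty\Sigma^{-n}D(n)\to\Omega^\infty M(n)\to\Omega^\infty\Sigma^{1-n}D(n-1)$ yield short exact sequences of connected Hopf algebras ``by standard infinite loop space techniques,'' and then you separately invoke the Whitehead conjecture to get long exactness. This is circular: a fiber sequence of infinite loop spaces does \emph{not} automatically produce a short exact sequence of Hopf algebras on mod $2$ homology. The crucial input is exactness in the sense of \cite{KP1} --- the existence of a section $\Omega^\infty\Sigma^{1-n}D(n-1)\to\Omega^\infty M(n)$ --- and that is precisely what Theorem~1.1 of~\cite{KP1} (the Whitehead conjecture, quoted in the paper as Theorem~\ref{whitehead}) supplies. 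In the paper's treatment, the Whitehead conjecture is used first, to establish that the fibrations are exact, and the short exact sequences of Hopf algebras then follow \emph{by definition of exactness}; splicing is then automatic and no further ``acyclicity'' assertion is needed. If, as you wrote, the short exact sequences were already available from generalities, the Whitehead conjecture would be superfluous for the long exactness.

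Second, the paper does not work directly with $\Sigma^{-n}D(n)$ but with the modified spectra $D'(n)$ defined as the fiber of $\Sigma^{-n}D(n)\to\Sigma^{-n}H\Z/2$, which are the $E_n$'s in the \cite{KP1} formalism. One has $\Omega^\infty D'(n)\simeq\Omega^\infty\Sigma^{-n}D(n)$ for $n\geq1$, so for most of the range this distinction is invisible and your version works. But at the bottom, $\Omega^\infty D'(0)\simeq Q_0S^0$ while $\Omega^\infty D(0)\simeq QS^0$ is not connected, and the proposed splicing yields a sequence ending in $\Z/2[\Z]\to\Z/2[\Z/2]\to\Z/2$ rather than directly in the desired $H_*(Q_0B\Z/2_+)\to H_*(Q_0S^0)\to\Z/2$. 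The paper handles this by passing to $0$-components and observing that $\Z/2[\Z]\to\Z/2[\Z]\to\Z/2[\Z/2]$ is exact, which accounts for the $Q_0$'s in the statement. Your proof should incorporate this adjustment explicitly, since the proposition as stated is about $Q_0$.
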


As $D(0)\cong S^0$, $\Sigma ^{-1}D(1)\cong MTO(1)$ \cite[Proposition 4.4]{MP}, and $M(1)\cong BO(1)_+$,
combined with the $n=2$ case of Theorem \ref{split}, we recover Theorems A and B of \cite{Ra}.

Of course, the cohomology being dual of homology, the exact sequences above give some information on certain characteristic classes.
More precisely, recall from \cite{Ra,KZ1}
\begin{defn}
A universally defined characteristic class  in $H^*(\Omega ^{\infty }_0MTO(n))$ is a element in the subalgebra generated by the image of $H^*(BO(n))\rightarrow H^*(Q_0BO(n))\rightarrow H^*(\Omega ^{\infty }_0MTO(n))$.  We denote $\mu _{i_1,\ldots ,i_n}=(\Omega ^{\infty}p_n)^*(\sigma ^{\infty *}(\sigma _1^{i_1}\ldots \sigma _n^{i_n}))$ where $H^*(BO(n))\cong \Z/2[\sigma _1,\ldots ,\sigma _n]$.
 \end{defn}

In \cite{KZ1}, we used the summand $BSO(2n+1)_+$ that split off $MTO(2n)$ to show that some of these classes remain algebraically
independent.  Here we use the summand $D(n)$ that splits off $MTO(n)$ to show that there are ``linear'' relations corresponding to
elements of $H^*(M(n))$, and that in the case of dimension 2, these relations together with the ones derived from the action of
top Steerond squares are the only relations.  More precisely, we will show:
\begin{thm}\label{relfromdn}
\begin{enumerate}
 \item In $H^*(\Omega ^{\infty }_0MTO(n))$, we have relations $(\Omega ^{\infty}p_n)^*(\sigma ^{\infty *}(x))=0$ for
$x\in H^*(M(n))\subset H^*(BO(n))$.
\item For $n=2$, the only relations among $\mu _{i,j}$'s are the relations above, and $\mu _{2i,2j}=\mu _{i,j}^2$.
\item Again for $n=2$, the subalgebra of universally defined characteristic classes in $H^*(\Omega ^{\infty }_0MTO(2))$
is the polynomial algebra generated by $\nu _{i,j}$'s with $ij$ odd, where $\nu _{i,j}$ is defined in \cite{KZ1}.
\end{enumerate}
\end{thm}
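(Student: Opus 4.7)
I would tackle the three parts in turn, with part (1) providing the crucial input. For part (1), the key observation is that by naturality of $\sigma^{\infty *}$, if $y\in H^*(M(n))$ is viewed in $H^*(BO(n)_+)$ via the Mitchell--Priddy stable retraction $\pi_n \colon BO(n)_+ \to M(n)$, then
\[
(\Omega^{\infty} p_n)^*\sigma^{\infty *}(\pi_n^*y) = (\Omega^{\infty}(\pi_n\circ p_n))^*\sigma^{\infty *}(y).
\]
Thus part (1) reduces to showing the stable map $\pi_n\circ p_n\colon MTO(n)\to M(n)$ is null. I would extract this nullity from the compatibility between the splitting $\Sigma^{-n}D(n)\subset MTO(n)$ of Theorem \ref{split} and the classical Mitchell--Priddy splitting of $M(n)$ off $BO(n)_+$, comparing the two cofibre sequences (\ref{cofibDM}) and (\ref{cofibgtmw}): the summand $\Sigma^{-1}MTO(n-1)$ already lies in $\ker p_n$ by (\ref{cofibgtmw}), while the compatibility forces the image under $p_n$ of the complementary $\Sigma^{-n}D(n)$-summand to miss the $M(n)$-summand of $BO(n)_+$.

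For part (2), the relation $\mu_{2i,2j}=\mu_{i,j}^2$ is immediate from $\sigma^{\infty *}$ commuting with Steenrod squares: since $(\sigma_1^i\sigma_2^j)^2 = \mathrm{Sq}^{i+2j}(\sigma_1^i\sigma_2^j)$ and $\mathrm{Sq}^k$ is squaring on degree-$k$ classes, we get $\sigma^{\infty *}((\sigma_1^i\sigma_2^j)^2) = \sigma^{\infty *}(\sigma_1^i\sigma_2^j)^2$, and applying the ring homomorphism $(\Omega^\infty p_2)^*$ preserves this identity. To show these together with the relations of part (1) are all, I would match Hilbert series: the decomposition $MTO(2)\simeq BSO(3)_+\vee\Sigma^{-2}D(2)$ from the abstract together with the Randal--Williams computation of $H^*(\Omega^{\infty}_0 MTO(2))$ yields the dimension of the subalgebra of universally defined classes, and this is then compared to the presentation $\Z/2[\mu_{i,j}]$ modulo the two relation families using the explicit description of $H^*(M(2))\subset H^*(BO(2))$.

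Part (3) follows by extracting a polynomial generating set from the presentation in part (2). Modulo the squaring relation one may restrict to indices $(i,j)$ not both even, and modulo the vanishing relations one may further reduce to $(i,j)$ with $ij$ odd. These surviving generators correspond to the $\nu_{i,j}$ defined in \cite{KZ1}, whose polynomial independence was established there via their nontrivial restrictions to the $BSO(3)_+$ summand of $MTO(2)$.

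The main obstacle is the nullity claim in part (1); making the compatibility clause of Theorem \ref{split} explicit enough to identify $\pi_n\circ p_n$ with the null map is the crux, since the dimension shifts between the two cofibre sequences (\ref{cofibDM}) and (\ref{cofibgtmw}) must be reconciled carefully. Once this is in place, parts (2) and (3) reduce to a bookkeeping exercise combining the present paper's splitting with the computations of Randal--Williams, Mitchell--Priddy, and \cite{KZ1}.
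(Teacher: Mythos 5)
Your reduction of part (1) to the nullity of the stable map $\pi_n\circ p_n\colon MTO(n)\to M(n)$ is a genuine gap: that composite is not null. Already at $n=1$, where $M(1)=BO(1)_+$ and $\pi_1$ is (up to self-equivalence) the identity, one has $\pi_1\circ p_1=p_1$. But $p_1$ cannot be null: the cofibre sequence (\ref{cofibgtmw}) reads $\Sigma^{-1}MTO(0)\to MTO(1)\xrightarrow{p_1}BO(1)_+\to MTO(0)=S^0$, and nullity of $p_1$ would force $S^0\simeq BO(1)_+\vee\Sigma MTO(1)$, which is absurd. More concretely, $p_n^*$ is multiplication by the Euler class after the Thom isomorphism, and for $n=1$ it is injective on $H^*(BO(1))$. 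The same difficulty persists for the paper's map $f_n$ in place of $\pi_n$: Theorem \ref{compatibility1} identifies $f_n\circ p_n$ with the composite $MTO(n)\xrightarrow{\alpha_n}\Sigma^{-n}D(n)\to M(n)$, and since $\alpha_n$ admits a section and the second map is epi in cohomology, this composite is not null. Your further gloss that ``$\Sigma^{-1}MTO(n-1)$ already lies in $\ker p_n$'' implicitly treats $\Sigma^{-1}MTO(n-1)$ as a \emph{summand} of $MTO(n)$, but (\ref{cofibgtmw}) is only a cofibre sequence, not a wedge decomposition, so there is no complement and no ``image of the complementary summand'' to speak of.

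The actual proof in the paper works at the level of infinite loop space cohomology rather than stable maps, and proves something strictly weaker than what you need. It uses the exact sequence of Hopf algebras coming from the Whitehead conjecture (Proposition \ref{dnhomologyexact}), a weight filtration $F_i$ on $QH_*(\Omega^\infty M(n))$, and a lemma asserting that $\Omega^\infty d_{n-1}$ injects $F_1/F_2(QH_*(\Omega^\infty M(n)))\cong H_*(M(n))$ into $F_2/F_4(QH_*(\Omega^\infty M(n-1)))$. Dualizing, the image of $PH^*(M(n))\xrightarrow{\sigma^{\infty *}}PH^*(\Omega^\infty M(n))\to PH^*(Q_0BO(n))$ is shown to lie in the image of $PH^*(\Omega^\infty_0 MTO(n-1))$. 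That factorization holds only for the specific \emph{primitive infinite-loop-space cohomology classes} $\sigma^{\infty *}(x)$, not at the spectrum level; one then finishes because $MTO(n)\to BO(n)_+\to MTO(n-1)$ is null, killing anything pulled back from $\Omega^\infty_0 MTO(n-1)$. If you want to salvage your idea, you would need to replace ``$\pi_n\circ p_n$ is null'' by ``$\sigma^{\infty *}(x)$ lies in the image of $(\Omega^\infty q_n)^*$ where $q_n\colon BO(n)_+\to MTO(n-1)$,'' and that is precisely what the paper's filtration argument establishes. Your treatment of parts (2) and (3) (squaring relation via instability, Hilbert-series comparison with the $BSO(3)_+\vee\Sigma^{-2}D(2)$ splitting, identification with the $\nu_{i,j}$) is in the right spirit and matches the paper's citation of \cite{KZ1} and Corollary \ref{splitbsdn}, but it rests on part (1) and so inherits the gap.
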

We will give more precise description of the inclusion $H^*(M(n))\subset H^*(BO(n))$ later.


\section{Maps from $MTO(n)$ to $\Sigma ^{-n}D(n)$}\label{sec:whi}
In this section we use results from \cite{KP1} to construct maps from $\Sigma ^nMTO(n)$'s to $D(n)$'s that form
a map of filtered spectra.  First of all, we recall reults we will need.
\subsection{Exact sequences of spectra and the Whitehad conjecture}
We start with a definition.
\begin{defn}[\cite{KP1}]
\begin{enumerate}
 \item A fibration sequence of spectra $F\rightarrow X\stackrel{f}{\rightarrow} Y$ is called exact if there exists a map
$g:\Omega ^{\infty}Y\lra \Omega ^{\infty}X$ such that $\Omega ^{\infty}f\circ g\simeq id$.
\item A sequence of spectra $\rightarrow \cdots \rightarrow X_n\rightarrow \cdots X_1\rightarrow X_0\rightarrow E_{-1}$
is called exact if for all $n$, $E_n\rightarrow X_n \rightarrow E_{n-1}$ is exact, where $E_n$ is inductively defined as
the fiber of the map
$X_n \rightarrow E_{n-1}$.
\end{enumerate}
\end{defn}
The category of spectra being a triangulated category instead of an abelian category, we have some complication here.  The
notion of exactness with three terms is more or less a counterpart of a split short exactness in abelian categories.  The use of this
seemingly too strong condition is motivated by the following fact.
By definition, an exact sequence of spectra yields an exact sequence of abelian groups
 upon applying $[Y,-]$ for a suspension spectrum $Y$, or a spectrum that is a summand of a suspension spectrum.
Thus one can regard suspension spectra as free objects, summands of suspension spectra as projective objects,
and carry out homological algebra in the category of spectra.

Now, one of the main results of \cite{KP1}, reads as follows.

\begin{thm}[{\cite[Theorem 1.1]{KP1}}]\label{whitehead}
Let $d_k$ be defined by the composition
$$d_k:M(k+1)\hookrightarrow BT(k+1)_+=B(T(1)\times T(k)) _+ \stackrel{tr}{\rightarrow }BT(k)_+\rightarrow M(k)$$
where $tr$ is the Becker-Gottlieb transfer, and the first and last map are obtained via the
splitting  $M(k) \simeq e_k'BT(k)_+$ with the conjugate Steinberg idempotent $e_k'$. Then the sequence
\begin{equation}\label{seq:wh}
 \cdots \stackrel{d_{k+1}}{\rightarrow} M(k+1)\stackrel{d_{k}}{\rightarrow} M(k)\rightarrow \cdots \rightarrow M(1)\stackrel{d_0}{\rightarrow} M(0)\stackrel{\epsilon}{\rightarrow} H\Z/2
\end{equation}
is exact.
\end{thm}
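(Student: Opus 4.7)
The plan is to establish exactness in the sense of the definition above from \cite{KP1}, which has two parts: (a) consecutive maps compose to zero, and (b) each stage admits a section after $\Omega ^{\infty}$.

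For part (a), I would show $d_{k-1}\circ d_k\simeq 0$ by exploiting the Steinberg idempotent structure. Since $d_k$ factors through the Becker-Gottlieb transfer $BT(k+1)_+\rightarrow BT(k)_+$ sandwiched between the conjugate Steinberg idempotents $e_{k+1}'$ and $e_k'$, the iterated composite is a double transfer which, by the double coset formula, expands into a sum indexed by $Gl_k(\Z/2)$-orbits of pairs of flags in $(\Z/2)^{k+1}$. The defining property of $e_k'$---it picks out the Steinberg representation and annihilates any representation parabolically induced from a proper parabolic subgroup---forces every term in this sum to vanish.

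For part (b), the key input is the Kahn-Priddy theorem, which states that the transfer $\Omega ^{\infty}\Sigma ^{\infty}B\Z/2_+\rightarrow \Omega ^{\infty}S^0$ admits a section at the prime 2. Applied componentwise to the factors of $BT(k)_+=B(\Z/2)^k_+$ and composed with the Steinberg idempotent projection, this yields candidate sections $g_k\co \Omega ^{\infty}E_{k-1}\rightarrow \Omega ^{\infty}M(k)$. I would then verify by induction on $k$ that the iterated fibers $E_{k-1}$ are consistent with this construction, using the cohomological description of $M(k)$ in terms of admissible monomials of length exactly $k$.

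The main obstacle is the inductive identification of $E_n$ together with verifying that these Kahn-Priddy sections really do land in the Steinberg summand $\Omega ^{\infty}M(k)$ in a manner compatible with the full filtration. This requires combining the Mitchell-Priddy model $M(n)\simeq e_n'B(\Z/2)^n_+$ from \cite{MP} with a careful analysis of how the length filtration of the Steenrod algebra interacts with iterated transfer maps, so that the homological match $H_*(E_n)\cong H_*(M(n+1))$ upgrades to a genuine splitting of spectra at each stage; once this upgrade is in place, the required exactness follows.
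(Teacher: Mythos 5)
The paper does not prove this statement at all; it imports it verbatim as Theorem 1.1 of Kuhn and Priddy \cite{KP1}, so there is no ``paper's own proof'' to compare against. Your proposal therefore has to be judged on its own merits against what \cite{KP1} actually does.

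Your part (a) is on the right track: expanding $d_{k-1}\circ d_k$ via the double coset formula for the composite transfer $BT(k+1)_+\to BT(k)_+\to BT(k-1)_+$ with the Steinberg idempotent inserted, and then invoking the vanishing of parabolically induced pieces in the Steinberg module, is indeed how one sees the composite is null. Part (b), however, contains the real gap. The Kahn--Priddy theorem supplies a section only at the very bottom stage, for the transfer $\Omega^\infty\Sigma^\infty B\Z/2_+\to Q_0S^0$. It cannot be ``applied componentwise'' to the factors of $BT(k)_+=B(\Z/2)^k_+$ to produce a section of $\Omega^\infty d_{k-1}\colon\Omega^\infty M(k)\to\Omega^\infty E_{k-1}$: the iterated fiber $E_{k-1}$ is not a product of copies of $Q_0S^0$ or $QB\Z/2$, composing Kahn--Priddy sections with the Steinberg idempotent projection gives you a map but no control over whether it hits $\Omega^\infty E_{k-1}$, and nothing in this construction lets you verify $\Omega^\infty d\circ g\simeq\mathrm{id}$. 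That verification \emph{is} the content of the mod~$2$ Whitehead conjecture, a deep theorem proved by Kuhn with genuinely different techniques (a filtration of $QS^0$ and a delicate analysis of the transfer and Dyer--Lashof operations). Kuhn and Priddy in \cite{KP1} do not reprove it either; they show their transfer sequence is equivalent to Kuhn's and then cite Kuhn's earlier theorem. You flag this as ``the main obstacle,'' which is honest, but the proposal neither closes that gap nor reduces it to a citable result, so as written it is a sketch of the setup rather than a proof.
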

We note that in \cite{KP1}, the sequence above is shown to be equivalent to another sequence consisting of $M(k+1)$'s,
whose exactness is known as the mod $2$ Whitehead conjecture (Corollary 1.2 {\it loc.cit.}).

\subsection{{Complexes of spectra}}
We still need some more definitions.
\begin{defn}
\begin{enumerate}
 \item By a chain complex of
 spectra $(E_n,d_n)$ we understand a sequence of spectra $E_n$ with maps $d_{n-1}:E_n\to E_{n-1}$ so that the
 composition $E_{n+1}\to E_n\to E_{n-1}$ is null for all $n$. By a map $f$ of chain complexes of spectra $(E_n,d_E)\to
 (F_n,d_F)$ we mean a collection of maps $f_n:E_n\to F_n$ such that $f_nd_E=d_Ff_{n+1}$. Two complexes are
said to be isomorphic if if there are maps $f$ from $E$ to $F$  such that all $f_n$'s are homotopy equivalences.
\item  Let $F_*X$ be a filtered spectrum.    Define its {\it associated graded} complex $Gr_{\bullet}(F_*X)$ by
$Gr_0(F_*X)=F_0X$, $Gr_i(F_*X)=\Sigma ^{-i} cofib(F_{i-1}X\to F_iX)$, with obvious maps $Gr_iF_*X\to Gr_{i-1}F_*X$.
\end{enumerate}
\end{defn}
\begin{rem}
It follows from an easy diagram chasing that two filtered spectra are equivalent if and only if their associated graded complexes
are isomorphic.  Thus $Gr$ provides an embedding of the category of filtered spectra into that of complexes of spectra.
\end{rem}
\begin{exm}

\begin{enumerate}
 \item Let $F_nX=D(n)$.  Then the associated graded complex $Gr_{\bullet}(F_*X)$ is
$$\cdots \to M(n+1)\stackrel{\delta _n}{\to} M(n)\to \cdots \to M(0)$$ considered in \cite[Corollary 1.2]{KP1}.  This complex,
with augmentation to $H\Z/2$ added,
was shown to be equivalent with the complex \ref{seq:wh} in \cite[section 6]{KP1}.
\item Let $F_nY=\Sigma ^nMTO(n)$.   Then the associated graded complex $Gr_{\bullet}(F_*X)$ is given by
$(BO(n)_+,tr)$ where
$tr$ is the Becker-Gottlieb transfer associated to the inclusion
$O(n-1)\subset O(n)$, as the Becker-Gottlieb transfer $BO(n)\to BO(n-1)$ factors as $BO(n)_+\to MTO(n-1)\to BO(n-1)_+$.
We can also see that $(BO(n)_+,tr)$ is a complex directly by noting that as $O(n)\subset O(n)\times O(2) \subset
O(n+2)$, thus $S^1\cong \{1\}\times SO(2)\subset O(n)\times O(2)$ normalizes $O(n)\subset O(n+2)$,
so the transfer associated to $O(n)\subset O(n+2)$ is trivial.  Moreover, the complex of free spectra
 $(BO(n)_+,tr)$ is augmented over $H\Z/2$ since the composition $BO(1)_+\rightarrow BO(0)_+\rightarrow H\Z/2$ is
 trivial. This is just another way of saying that the transfer in $\Z/2$-cohomology $H^*(BO(0)_+;\Z/2)\rightarrow
 H^*(BO(1)_+;\Z/2)$ is trivial.
\end{enumerate}

\end{exm}

{Now we are ready to do homological algebra.
We recall the following.}

\begin{prop}\label{analogy}
Let $(P_{\bullet },d_{\bullet })$ be a cochain complex of projective $R$-modules with an augmentation
$P_0\rightarrow A$, and $(A_{\bullet },d_{\bullet })$ be a resolution of $A$.  Then we get a cochain map
from $(P_{\bullet },d_{\bullet })$ to $(A_{\bullet },d_{\bullet })$ .
\end{prop}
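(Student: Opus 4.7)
The plan is the classical inductive construction behind the comparison theorem in homological algebra: build $f_n\co P_n\to A_n$ one degree at a time, using projectivity of $P_n$ to lift against the epimorphism that exactness of the resolution $A_\bullet$ provides.

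For the base step, I use that $A_\bullet\to A$ being a resolution makes the augmentation $A_0\to A$ surjective; projectivity of $P_0$ then lifts the augmentation $\epsilon\co P_0\to A$ to a map $f_0\co P_0\to A_0$ compatible with the augmentations, i.e.\ $\epsilon_A\,f_0=\epsilon$.

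For the inductive step, suppose $f_0,\dots,f_n$ have been constructed with $d\,f_k=f_{k-1}\,d$ for $1\le k\le n$. Consider the composite $f_n\,d\co P_{n+1}\to A_n$. Using $d^2=0$ in $P_\bullet$ together with the already-established relation $d\,f_n=f_{n-1}\,d$, one checks that $d(f_n\,d)=f_{n-1}(d\circ d)=0$, so $f_n\,d$ factors through $\ker(d\co A_n\to A_{n-1})$. Exactness of $A_\bullet$ in degree $n$ identifies this kernel with $\im(d\co A_{n+1}\to A_n)$, giving a surjection $A_{n+1}\twoheadrightarrow\ker d$; projectivity of $P_{n+1}$ now lifts $f_n\,d$ through this surjection to produce the required $f_{n+1}$ with $d\,f_{n+1}=f_n\,d$.

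The hard part, such as it is, is essentially non-existent: this is the classical comparison theorem, and the argument is the standard one. The only point worth underlining is the asymmetry of hypotheses: $P_\bullet$ is only assumed to be a complex of projectives (with $\epsilon\,d=0$), not itself a resolution, whereas $A_\bullet$ must be exact. That asymmetry is precisely what makes the spectrum analogue useful in the sequel, where the projectives correspond to retracts of suspension spectra and the exactness to the Whitehead-type exactness of Theorem \ref{whitehead}.
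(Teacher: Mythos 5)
Your proof is correct and is exactly the standard comparison-theorem induction the paper has in mind; the paper explicitly omits it as an easy exercise, only later (in the proof of Proposition \ref{analogy1}) indicating that the key input is that $\mathrm{Hom}(P,-)$ preserves short exact sequences for projective $P$, which is precisely the lifting property you invoke at each stage. No discrepancies.
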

This is an easy exercise using the definition of projectives and exactness, and proof is omitted.
We now translate this to our setting.

\begin{prop}\label{analogy1}
Let $(P_{\bullet },d_{\bullet })$ be a chain complex of projective spectra with an augmentation
$P_0\rightarrow A$, and $(A_{\bullet },d_{\bullet })$ be a resolution of $A$.  Then we get a map of chain complexes
from $(P_{\bullet },d_{\bullet })$ to $(A_{\bullet },d_{\bullet })$ .
\end{prop}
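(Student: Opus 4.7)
The plan is to follow the classical inductive construction used for Proposition \ref{analogy}, with the abelian notion of ``kernel of a differential'' replaced by the fibre spectra $E_n$ supplied by the definition of an exact sequence of spectra, and ``projectivity'' interpreted as the lifting property enjoyed by summands of suspension spectra.

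Let $\alpha_n\colon A_n\to E_{n-1}$ denote the canonical map (with the convention $E_{-1}=A$) and $\beta_n\colon E_n\to A_n$ the fibre inclusion, so that the differential factors as $d^A_n=\beta_{n-1}\alpha_n$. Exactness of the resolution $A_\bullet$ means that each $\Omega^\infty\alpha_n$ admits a section, and hence the fibration $\Omega^\infty E_n\to\Omega^\infty A_n\to\Omega^\infty E_{n-1}$ of spaces splits as a product. Together with projectivity, this yields two facts that we will use repeatedly:
\begin{enumerate}
\item for every projective spectrum $P$, the map $[P,A_{n+1}]\to[P,E_n]$ induced by $\alpha_{n+1}$ is surjective;
\item for every projective spectrum $P$, the map $[P,E_n]\to[P,A_n]$ induced by $\beta_n$ is injective, since the splitting realises $\Omega^\infty E_n$ as a retract of $\Omega^\infty A_n$.
\end{enumerate}

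The chain map $f_\bullet$ is then built by induction on $n$. For $n=0$, property (1) applied to $\epsilon_P\colon P_0\to A=E_{-1}$ produces $f_0\colon P_0\to A_0$ with $\alpha_0 f_0\simeq \epsilon_P$. Inductively, assume $f_0,\ldots,f_n$ have been constructed with $d^A_k f_k\simeq f_{k-1}d^P_k$ for $1\le k\le n$, and set $\phi = f_n d^P_{n+1}\colon P_{n+1}\to A_n$. For $n\ge 1$ one computes
\[
\beta_{n-1}\alpha_n\phi = d^A_n f_n d^P_{n+1}\simeq f_{n-1} d^P_n d^P_{n+1}\simeq *,
\]
so property (2) upgrades this to $\alpha_n\phi\simeq *$; for $n=0$ one reads off $\alpha_0\phi\simeq \epsilon_P d^P_1\simeq *$ directly from the augmentation condition on $P_\bullet$. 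In either case, the universal property of the fibre sequence $E_n\to A_n\to E_{n-1}$ factors $\phi$ as $\phi\simeq\beta_n\tilde f_{n+1}$ for some $\tilde f_{n+1}\colon P_{n+1}\to E_n$; property (1) then lifts $\tilde f_{n+1}$ to $f_{n+1}\colon P_{n+1}\to A_{n+1}$ with $\alpha_{n+1}f_{n+1}\simeq\tilde f_{n+1}$, and hence $d^A_{n+1}f_{n+1}=\beta_n\alpha_{n+1}f_{n+1}\simeq\beta_n\tilde f_{n+1}\simeq\phi = f_n d^P_{n+1}$, completing the induction.

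The only point where the spectrum setting demands more than a rote translation of the abelian proof is property (2): a fibre inclusion $\beta_n$ is not generally ``monic'' on $[P,-]$, and it is precisely the strong splitting built into the notion of exactness in \cite{KP1} that rescues the argument at this step.
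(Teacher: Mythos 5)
Your proof is correct and follows essentially the same route as the paper: both rest on the observation that, for $P$ a summand of a suspension spectrum and $C_0\to C_1\to C_2$ a three-term exact sequence of spectra, the sequence $[P,C_0]\to[P,C_1]\to[P,C_2]$ is short exact of abelian groups (your properties (1) and (2) are precisely the two halves of this), and then run the standard inductive construction from homological algebra. The paper compresses the induction into a citation of Proposition~\ref{analogy} after establishing this $[P,-]$-exactness, whereas you unfold the induction in full; the underlying argument is the same.
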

\begin{proof}
First, note that Proposition \ref{analogy} is proved using the fact that if $P$ is projective and $C_0\rightarrow C_1\rightarrow C_2$
is a short exact sequence, then $Hom(P,C_0)\rightarrow Hom(P,C_1)\rightarrow Hom(P,C_2)$ is short exact. Thus it suffices to show that
if $P$ is a suspension spectrum (or a summand of a suspension spectrum, but this doesn't affect anything), and $C_0\rightarrow C_1\rightarrow C_2$
is a short exact sequence of spectra, then $[P,C_0]\rightarrow [P,C_1]\rightarrow [P,C_2]$ is a short exact sequence
of abelian groups.  Write $P=\Sigma ^{\infty}X$.  Then we have $[P,C_i]\cong [X,\Omega ^{\infty}C_i]$.
By definition of the short exactness, the map $\Omega ^{\infty}C_1\rightarrow \Omega ^{\infty}C_2$ (as well as loop on this)
admits a section,
thus $[P,C_1]\cong [X,\Omega ^{\infty}C_1]\rightarrow  [X,\Omega ^{\infty}C_2]\cong
[P,C_2]$ is (split-)epi.  Since $C_0\rightarrow C_1\rightarrow C_2$ is a cofibration of
spectra, it follows that $[P,C_0]\rightarrow [P,C_1]\rightarrow [P,C_2]$ is a short exact.
\end{proof}

\subsection{{Construction of the maps}}
{By the examples of previous section, and Proposition \ref{analogy1}, we have a maps of chain complexes of spectra $(BO(n)_+,tr)\to (M(n),d_n)$
That is,} 
we have proven the existence of maps $f_n$ such that the following square commutes

$$\begin{diagram}
\node{BO(n)_+}\arrow{s,r}{f_n}\arrow{e,t}{tr}\node{BO(n-1)_+}\arrow{s,r}{f_{n-1}}\\
\node{M(n)}\arrow{e,t}{d_{n-1}}\node{M(n-1)}
  \end{diagram}
$$
\begin{rem}
The spectrum $M(0)$ is just $S^0$, and $M(1)=BO(1)_+$. The maps $BO(0)_+\to H\Z/2$ and $\epsilon:M(0)\to H\Z/2$ coincide with the unit of $H\Z/2$, and the maps $f_1$ and $f_0$ can be taken to be the identity. 
\end{rem}
Now we are ready to prove the following.

\begin{thm}\label{compatibility1}
{Fix maps $f_n:(BO(n)_+,tr )\to (M(n),d_n)$. Then t}here exists a map \\ $\alpha _n \co MTO(n)
\rightarrow \Sigma ^{-n}D(n)$ which makes the following diagram commutative for each $n$
$$
\begin{diagram}
 \node{MTO(n)}\arrow{s,t}{\alpha _n}\arrow{e}   \node{BO(n)_+}\arrow{s,t}{{f_n}}\\
\node{\Sigma ^{-n}D(n)}\arrow{e}                \node{M(n).}
\end{diagram}
$$
\end{thm}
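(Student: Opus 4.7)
The plan is to prove the existence of $\alpha_n$ by induction on $n$, using the axiom TR3 for triangulated categories together with the cofibrations (\ref{cofibgtmw}) and (\ref{cofibDM}) and the chain-map property of $f_\bullet$. The base case $n=0$ is immediate: $MTO(0)=S^0=D(0)$, and with $f_0=\mathrm{id}$ one may take $\alpha_0=\mathrm{id}$.

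For the inductive step, assume $\alpha_{n-1}$ has been constructed. Consider the cofibration
$$\Sigma^{-1}MTO(n-1) \to MTO(n) \stackrel{p_n}{\to} BO(n)_+ \stackrel{\partial}{\to} MTO(n-1)$$
from (\ref{cofibgtmw}), and a suspension shift
$$\Sigma^{-n}D(n-1) \to \Sigma^{-n}D(n) \to M(n) \stackrel{\delta}{\to} \Sigma^{-(n-1)}D(n-1)$$
of (\ref{cofibDM}). Applied to the appropriately rotated triangles with $\Sigma^{-1}\alpha_{n-1}$ on the left and $f_n$ on the right, TR3 produces $\alpha_n\co MTO(n)\to\Sigma^{-n}D(n)$ making the diagram of the theorem commute, provided the outer square
$$\alpha_{n-1}\circ\partial \;=\; \delta\circ f_n \colon BO(n)_+ \to \Sigma^{-(n-1)}D(n-1) \qquad (\star)$$
commutes. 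To check $(\star)$ I use two structural observations, both noted in the paper's discussion of the associated graded complexes: $p_{n-1}\circ\partial=\mathrm{tr}$ (the Becker--Gottlieb transfer factorises through $MTO(n-1)$), and $\pi_{n-1}\circ\delta=d_{n-1}$, where $\pi_{n-1}\co \Sigma^{-(n-1)}D(n-1)\to M(n-1)$ is the next cofibre projection. The chain-map identity $d_{n-1}\circ f_n=f_{n-1}\circ\mathrm{tr}$, combined with the inductive hypothesis $f_{n-1}\circ p_{n-1}=\pi_{n-1}\circ\alpha_{n-1}$, then yields
$$\pi_{n-1}\circ(\delta\circ f_n) \;=\; \pi_{n-1}\circ(\alpha_{n-1}\circ\partial),$$
so $(\star)$ holds after postcomposition with $\pi_{n-1}$.

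The main obstacle is to upgrade this to commutativity on the nose. The difference $\delta\circ f_n-\alpha_{n-1}\circ\partial$ lies in $\ker(\pi_{n-1})_{*}$, hence in the image of $[BO(n)_+,\Sigma^{-(n-1)}D(n-2)]$; to kill it one must exploit the freedom in the choice of $\alpha_{n-1}$ (modifying it by a class lifted from $\Sigma^{-(n-1)}D(n-2)$ via $\partial$). The cleanest way to organise this is to strengthen the inductive hypothesis so that $(\star)$ is part of the data carried at every stage, allowing iterative corrections down the tower $\{\Sigma^{-k}D(k)\}$; an equivalent, more direct approach is to show that the obstruction $\delta\circ f_n\circ p_n\in [MTO(n),\Sigma^{-(n-1)}D(n-1)]$ vanishes by exploiting $\partial\circ p_n=0$ in conjunction with the cofibrations of $D$'s and the projectivity used in the proof of Proposition \ref{analogy1}. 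Either way, the hard step is the coherent propagation of the compatibility through all levels of the filtration.
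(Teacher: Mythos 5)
Your induction scheme and TR3 fill‑in are the same as the paper's, and you correctly reduce the problem to showing that the outer square $(\star)$ commutes; you also correctly verify $(\star)$ after postcomposition with $\pi_{n-1}\co\Sigma^{1-n}D(n-1)\to M(n-1)$ using the chain‑map identity $d_{n-1}\circ f_n=f_{n-1}\circ\mathrm{tr}$ and the inductive hypothesis. The gap is precisely the step you flag as ``the main obstacle'': you do not actually upgrade weak commutativity to commutativity, and neither of your two proposed routes is adequate. Modifying $\alpha_{n-1}$ to absorb the difference is circular: $\alpha_{n-1}$ is already pinned down by the commutativity of the square at level $n-1$, and any correction that destroys that commutativity defeats the induction. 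Your second suggestion — showing $\delta\circ f_n\circ p_n=0$ — is both unproven (nothing about $\partial\circ p_n=0$ or the projectivity in Proposition \ref{analogy1} alone forces it) and insufficient: even granting it, the cofibration only produces \emph{some} factorisation of $\delta\circ f_n$ through $MTO(n-1)$, not a proof that $\alpha_{n-1}$ itself makes $(\star)$ commute.

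The ingredient you are missing is the exactness of the Kuhn–Priddy resolution, i.e.\ Theorem \ref{whitehead}. First, a short cohomology argument (the bottom class of $H^*(\Sigma^{1-n}D(n-1))$ pulls back to zero, since both horizontal maps in the square are zero in cohomology) shows that both diagonal composites $BO(n)_+\to\Sigma^{1-n}D(n-1)$ factor through $E_{n-1}$, the fibre of $\Sigma^{1-n}D(n-1)\to\Sigma^{1-n}H\Z/2$. Then the exactness of the fibration $E_{n-1}\to M(n-1)\to E_{n-2}$, applied against the free spectrum $BO(n)_+$, yields a split short exact sequence, so $[BO(n)_+,E_{n-1}]\to[BO(n)_+,M(n-1)]$ is injective. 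This is exactly what upgrades your weak version of $(\star)$ to $(\star)$ on the nose. Note that the injectivity is into $[BO(n)_+,M(n-1)]$ \emph{from} $[BO(n)_+,E_{n-1}]$, not from $[BO(n)_+,\Sigma^{1-n}D(n-1)]$; the kernel of $(\pi_{n-1})_*$ on the latter is genuinely nontrivial in general, which is why the preliminary factorisation through $E_{n-1}$ is indispensable and why your analysis of $\ker(\pi_{n-1})_*$ by itself does not close the argument.
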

\begin{proof}
We proceed by induction on $n$.  The case $n=0$ is trivial.
Suppose that we have constructed such $\alpha _{n-1}$.  Consider the following diagram.
$$
\begin{diagram}
 \node{BO(n)_+}\arrow{e}\arrow{s,l}{f_n}\node{MTO(n-1)}\arrow{s,r}{{\alpha_{n-1}}}\\
 \node{M(n)}\arrow{e}\node{\Sigma ^{1-n}D(n-1)}
\end{diagram}
$$
If we can show that this diagram commutes, then we can define the map $\alpha _n$ using the cofibrations
(\ref{cofibMB}) and (\ref{cofibDM}), which will conclude the proof.
 Note that the two horizontal maps induces trivial maps in cohomology, which implies that the two compositions from the top left corner to bottom right corner factor through
$E_{n-1}$.  Here $E_n$ is the spectrum defined in \cite{KP1}, in other words, $E_n$ is the fiber of the map
$\Sigma ^nD(n)\rightarrow \Sigma ^nH\Z/2$. Thus we need to show that the two elements in $[BO(n)_+,E_{n-1}]$ agree.  However, by \cite[Theorem 1]{KP1} , $[BO(n)_+,E_{n-1}]$ injects to $[BO(n)_+,M({n-1})]$.  Thus it suffices to show that the two maps agree after composition with the map
$E_{n-1}\rightarrow \Sigma ^{1-n}D(n-1) \rightarrow M({n-1})$. Now, consider the following diagram.
$$
\begin{diagram}
 \node{BO(n)_+}\arrow{e}\arrow{s,l}{f_n}\node{MTO(n-1)}\arrow{s,l}
{\alpha _{n-1}}\arrow{e}\node{BO(n-1)_+}\arrow{s,l}{f_{n-1}} \\
 \node{M(n)}\arrow{e}\node{\Sigma ^{1-n}D(n-1)}\arrow{e}\node{M(n-1)}
\end{diagram}
$$
The right square is commutative by inductive hypothesis.  But we chose our maps $f_n$ so that the big square commutes.
This finishes the proof.

\end{proof}
Note that by construction, the maps $\alpha _n$ form a map of filtered spectra.

\begin{rem}
 A reader familiar with \cite{KP1} must have noticed that our arguments are slightly upside-down.  If one follows the proofs in
\cite{KP1}, we get $\alpha _{n-1}$ before $f_n$.  As main interests of the authors of {\it loc.cit.} were not on $D(n)$'s, some
statements that could have been proved in {\it loc.cit} and that we could have quoted are not there.  We chose to quote the
statements that could be easily found, instead of details of proofs.
\end{rem}

\section{The splitting}\label{sec:tak}
In this section, we construct a map of filtered spectra from $D(n)$'s to $\Sigma ^{n}MTO(n)$'s and conclude the proof of
Theorem \ref{split}.  The main ingredient here is the description of $D(n)$ as a summand of Thom spectrum, which is implicit
in \cite{Takayasu}
\subsection{$D(n)$ as a summand of Thom spectrum}
Consider the reduced regular representation $\rho_n$ of $T(n)=O(1)^{\times n}$ and let $M(n)_k$ be the stable summand of the Thom spectrum $BT(n)^{k\rho_n}$ corresponding to the Steinberg idempotent $e_n$. In particular, $M(n)_0=M(n)$ defined in previous section. According to Takayasu \cite{Takayasu} there is a cofibration of spectra
\begin{equation}\label{Takcofib}
\Sigma^kM(n-1)_{2k+1}\lra M(n)_k\lra M(n)_{k+1}
\end{equation}
where the mapping $M(n)_k\stackrel{j_k}{\to} M(n)_{k+1}$ is induced by the bundle map $k\rho_n\to (k+1)\rho_n$. We are interested in the case $k=-1$ where Takayasu's cofibration looks like
\begin{equation}\label{cof:tak}
 \Sigma^{-1}M(n-1)_{-1}\stackrel{i'_{-1}}\lra M(n)_{-1}\stackrel{j_{-1}}\lra M(n)_0.
\end{equation}
Furthermore, the map
$i'_{-1}$ satisfies the property $i'_{-1}\circ j_{-2}=i_{-1}$, where $ j_{-2}:\Sigma ^{-1}M(n-1)_{-2}\rightarrow M(n)_{-1}$ is induced by the
inclusion $T(n-1)\subset T(n)$ \cite[Theorem A]{Takayasu}.
We record the following observation which is implicit in \cite{Takayasu}
\begin{lmm}\label{splitting1}
There is a homotopy equivalence $M(n)_{-1}\lra\Sigma^{-n}D(n)$, i.e. $\Sigma^{-n}D(n)$ is a stable summand of $BT(n)^{-\rho_n}$ corresponding to the Steinberg idempotent.
\end{lmm}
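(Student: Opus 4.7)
The plan is to argue by induction on $n$, using Takayasu's cofibration (\ref{cof:tak}) as the workhorse; the identification $M(n)_{-1}\simeq \Sigma^{-n}D(n)$ is essentially packaged inside \cite{Takayasu}, and the task is to unwrap it. For the base case $n=1$, the group $T(1)=\Z/2$ has reduced regular representation equal to the sign representation, so $BT(1)^{-\rho_1}=BO(1)^{-\gamma_1}=MTO(1)$; moreover the Steinberg idempotent $e_1$ is the identity of $\Z/2[GL_1(\Z/2)]$, so $M(1)_{-1}=MTO(1)$, which is identified with $\Sigma^{-1}D(1)$ by \cite[Proposition 4.4]{MP}.

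For the inductive step, assume $M(n-1)_{-1}\simeq\Sigma^{1-n}D(n-1)$. Takayasu's cofibration (\ref{cof:tak}) then reads
$$\Sigma^{-n}D(n-1)\xrightarrow{i'_{-1}} M(n)_{-1}\xrightarrow{j_{-1}} M(n),$$
while the very definition of $M(n)$ as the cofibre of $\Sigma^{-n}D(n-1)\to\Sigma^{-n}D(n)$ furnishes a second cofibration
$$\Sigma^{-n}D(n-1)\to\Sigma^{-n}D(n)\to M(n).$$
These two triangles share outer terms, so to conclude $M(n)_{-1}\simeq\Sigma^{-n}D(n)$ it is enough to check that the connecting maps $M(n)\to\Sigma^{1-n}D(n-1)$ agree up to homotopy, for then the middle terms are canonically equivalent. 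The natural input for this is the compatibility $i'_{-1}\circ j_{-2}=i_{-1}$ from \cite[Theorem A]{Takayasu}, which, under the inductive equivalence, expresses $i'_{-1}$ in terms of the inclusion-induced map associated to $T(n-1)\subset T(n)$, and, combined with the symmetric-power description of the $D(n)$'s from \cite{MP}, matches it with the canonical filtration map $\Sigma^{-n}D(n-1)\to\Sigma^{-n}D(n)$.

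The main obstacle is carrying this identification of maps through cleanly, since two filtered spectra with the same associated graded complex are not automatically equivalent. If the direct bookkeeping via Takayasu's compatibility proves too delicate, a backup route is cohomological: compute $H^*(M(n)_{-1})$ using the Thom isomorphism together with the known action of the Steinberg idempotent, recognise it as $\Sigma^{-n}\mathcal{A}/G_n\cong H^*(\Sigma^{-n}D(n))$ as an $\mathcal{A}$-module, and then apply the five lemma to the two cofibre sequences in mod-$2$ cohomology to deduce that any filler $M(n)_{-1}\to\Sigma^{-n}D(n)$ produced from the triangle comparison is an isomorphism in cohomology, hence a homotopy equivalence after $2$-completion.
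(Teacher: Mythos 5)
Your proposal takes a genuinely different route from the paper, and it has a real gap that you yourself flag but do not close.

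Your main plan is an induction: identify $M(n-1)_{-1}$ with $\Sigma^{1-n}D(n-1)$, then compare the Takayasu cofibration $\Sigma^{-n}D(n-1)\to M(n)_{-1}\to M(n)$ with the defining cofibration $\Sigma^{-n}D(n-1)\to\Sigma^{-n}D(n)\to M(n)$. You correctly observe that two triangles with identified outer terms do not yield an equivalence of middle terms unless the relevant maps match up --- concretely, you need the two maps $\Sigma^{-n}D(n-1)\to (\text{middle term})$ (equivalently, the two connecting maps $M(n)\to\Sigma^{1-n}D(n-1)$) to agree up to homotopy, so that the axiom of triangulated categories produces a fill-in on the middle. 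The relation $i'_{-1}\circ j_{-2}=i_{-1}$ constrains $i'_{-1}$ only after precomposition with $j_{-2}$, and you give no argument that this pins down $i'_{-1}$ itself as the canonical filtration map $\Sigma^{-n}D(n-1)\to\Sigma^{-n}D(n)$ under the inductive identification. Your backup ``cohomological'' route has the same circularity: the five lemma only tells you that \emph{a filler, if it exists,} is a cohomology isomorphism, but producing the filler is exactly what is at stake, and an abstract isomorphism $H^*(M(n)_{-1})\cong H^*(\Sigma^{-n}D(n))$ of $\mathcal{A}$-modules does not by itself yield a map of spectra.

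The paper closes precisely this gap by replacing the ``compare two cofibrations'' step with a uniqueness theorem. It uses Takayasu's Proposition~4.1.6 and Corollary~4.2.3 to show that $j_{-1}^*$ is mono, hence $i'^{*}_{-1}$ is epi, so that the tower $M(0)_{-1}\to\Sigma M(1)_{-1}\to\cdots$ realizes the length filtration of $\mathcal{A}$ on cohomology. Then it invokes the Hunter--Kuhn characterization theorem \cite[Corollary~1.4.1]{HK}, which says that any such filtered spectrum is equivalent to $D(0)\to D(1)\to\cdots$. This is the key input your argument is missing: a Brown--Gitler-type uniqueness statement that converts cohomological data directly into a homotopy equivalence, sidestepping the need to match connecting maps by hand. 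If you want to salvage your approach, you should either cite (and verify the hypotheses of) such a uniqueness theorem, or give an honest argument identifying $i'_{-1}$ with the filtration map.
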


\begin{proof}
By \cite[Proposition 4.1.6]{Takayasu}, we have $H^*(M(n)_{-1})\cong H^*(D(n))$. By Corollary 4.2.3 {\it loc.cit.},
$j_{-1}^*$ is monomorphism, so by the long exact sequence for the cofibration (\ref{cof:tak}) $i_{-1}^{\prime *}$ is epimorphism,
and the filtered spectrum
$$M(0)_{-1}\rightarrow \cdots \rightarrow \Sigma ^{n-1}M(n-1)_{-1}\rightarrow \Sigma ^nM(n)_{-1}\rightarrow \cdots $$
realizes the length filtration of the Steenrod algebra.  Thus by \cite[Corollary 1.4.1]{HK} is equivalent to
$$D(0)\rightarrow \cdots \rightarrow D(n)\rightarrow D(n+1)\rightarrow \cdots .$$
\end{proof}
\begin{rem}
 The above can also be proved by direct cohomology calculation using \cite[Proposition 4.1.6]{Takayasu} and
\cite[Theorem 5.8]{MP}.
\end{rem}

\subsection{Maps from $\Sigma^{-n}D(n)$ to $MTO(n)$}

Denote by $\beta _n$ the composition
$$\Sigma^{-n}D(n) \lra BT(n)^{-\rho_n}\lra BT(n)^{(-\gamma_1)^{\times n}}\lra BO(n)^{-\gamma_n}\cong MTO(n)$$
where  the map at the middle is induced by  the embedding $\gamma_1^{\times n}\subset \rho_n$
(or the embedding of virtual vector bundles $-\rho_n\subset (-\gamma_1)^{\times n}$ ), and
the last map is induced by the inclusion $T(n)\lra O(n)$.

Unlike the maps $\alpha _n$ that are constructed as maps between cofibers, the compatibility between different $\beta _n$'s are
not immediate from the definition.  Thus our first task is to show that they form a map of filtered spectra.

\begin{lmm}\label{compatilbe_beta}
We have following commutative diagram.  Thus the maps $\beta _n$'s form a map of filtered spectra.
$$
\begin{diagram}
 \node{D(n-1)}\arrow{e}\arrow{s,l}{\beta _{n-1}} \node{D(n)}\arrow{e}\arrow{s,l}{\beta _{n}} \node{\Sigma ^nM(n)}\arrow{s}\\
 \node{\Sigma ^{n-1}MTO(n-1)}\arrow{e} \node{\Sigma ^nMTO(n)}\arrow{e}  \node{\Sigma ^n{BO(n)}}
\end{diagram}
$$
\end{lmm}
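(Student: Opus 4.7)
The strategy is to prove the two squares of the diagram separately. For the left square, I desuspend by $n-1$ and use Lemma \ref{splitting1} to rewrite the claim as commutativity of
$$
\begin{diagram}
\node{\Sigma^{-1}M(n-1)_{-1}}\arrow{e,t}{i'_{-1}}\arrow{s,l}{\Sigma^{-1}\beta_{n-1}}\node{M(n)_{-1}}\arrow{s,r}{\beta_n}\\
\node{\Sigma^{-1}MTO(n-1)}\arrow{e}\node{MTO(n)}
\end{diagram}
$$
whose top arrow is Takayasu's $i'_{-1}$ from (\ref{cof:tak}) and whose bottom arrow, the desuspension of $\iota_n$, is induced by the inclusion $O(n-1)\subset O(n)$.

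The plan is to lift this square to a larger commutative diagram of Thom spectra, whose columns unfold $\beta_m$ as
$$M(m)_{-1}\hookrightarrow BT(m)^{-\rho_m}\to BT(m)^{(-\gamma_1)^{\times m}}\to BO(m)^{-\gamma_m}=MTO(m).$$
Each small rectangle in the lifted diagram should commute by naturality of the Thom construction, with horizontal arrows at the first three levels coming from $T(n-1)\subset T(n)$ and at the last level from $O(n-1)\subset O(n)$. The critical input that places $i'_{-1}$ into such a naturality square is Takayasu's identity $i'_{-1}\circ j_{-2}=i_{-1}$, which identifies $i'_{-1}$, after passage through the appropriate Steinberg summand, with a map induced by the subgroup inclusion $T(n-1)\subset T(n)$.

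For the right square, once the left square is known to commute, the universal property of the horizontal cofibres produces a candidate map $\Sigma^n M(n)\to \Sigma^n BO(n)_+$, and we need to identify it with the Mitchell--Priddy summand inclusion. This can be argued by observing that $\beta_n$ is built through the Steinberg summand $M(n)_{-1}\hookrightarrow BT(n)^{-\rho_n}$, so composition with $\Sigma^n p_n$ factors through the Steinberg summand of $\Sigma^n BT(n)_+$ and hence of $\Sigma^n BO(n)_+$; alternatively, a short cohomological calculation, using the known structure of $H^*(M(n))\subset H^*(BO(n))$ together with the Thom isomorphism, pins the induced map down.

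The main obstacle I anticipate is the lifting step. The map $i'_{-1}$ is defined abstractly as a cofibration-connecting map and only acquires a Thom-geometric interpretation after composition with $j_{-2}$, so careful bookkeeping of the virtual-bundle identifications --- in particular how $\rho_n|_{T(n-1)}$ decomposes relative to $\rho_{n-1}$, and how the embeddings $\gamma_1^{\times m}\subset \rho_m$ for $m=n-1,n$ match under restriction --- together with compatibility of the Steinberg idempotent under subgroup restriction, is where the delicacy lies.
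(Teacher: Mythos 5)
Your decomposition into left and right squares, the decision to unfold $\beta_m$ through the Thom--spectrum levels $M(m)_{-1}\hookrightarrow BT(m)^{-\rho_m}\to BT(m)^{(-\gamma_1)^{\times m}}\to MTO(m)$, and your recognition that Takayasu's identity $i'_{-1}\circ j_{-2}=i_{-1}$ is the critical input all match the paper's strategy. However, you explicitly flag ``the lifting step'' as the main obstacle and do not resolve it --- and this is precisely the point at which the paper supplies the one missing idea. As you correctly observe, $i'_{-1}$ is only Thom-geometric after precomposition with $j_{-2}\colon\Sigma^{n-1}M(n-1)_{-2}\to\Sigma^{n-1}M(n-1)_{-1}=D(n-1)$, so one must justify \emph{why it is enough} to check commutativity after precomposing with $j_{-2}$. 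The paper's answer is an obstruction-theoretic/connectivity argument: the common target $\Sigma^{n}MTO(n)$ is $(-1)$-connected, and the cofiber of $j_{-2}$ is sufficiently negatively concentrated that the difference of the two composites (which factors through that cofiber once they are shown to agree after $j_{-2}$) is forced to vanish. Without this reduction step, exhibiting the $3\times4$ grid of Thom spectra and invoking $GL_{n-1}(\Z/2)$-equivariance plus $e_{n-1}e_n=e_n$ does not by itself establish commutativity of the original square with $i'_{-1}$ along the top, since that map does not itself fit into a naturality square. So there is a genuine gap, even though you have identified where it sits.

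For the right square you also deviate from the paper, which simply reruns the ``same but easier'' Thom-geometric argument for the map $D(n)\to\Sigma^nM(n)$. Your alternative --- using the universal property of the horizontal cofibers to produce a candidate and then identifying it with the Mitchell--Priddy summand inclusion, either via the Steinberg-summand factorization or by a cohomology calculation --- is plausible but costlier: cofiber-induced maps are not canonically unique, so the identification step is a real piece of work you would still have to carry out, and the cohomological route would require pinning down $H^*(M(n))\subset H^*(BO(n))$ with enough precision to rigidify the map. The direct naturality argument the paper uses sidesteps all of that.
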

\begin{proof}
First consider the square on the left.  Since the target of the two compositions is $(-1)$-connected, and the fiber of the map
$j_{-2}:\Sigma ^{n-1}M(n-1)_{-2}\rightarrow \Sigma ^{n-1}M(n-1)_{-1}=D(n-1)$ has no cell in positive dimension, we see that it suffices to
show that they become homotopic after the composition with $j_{-2}$.  Now, consider the diagram
$$
\begin{diagram}
 \node{D(n-1)}\arrow{e}\node{\Sigma ^{n-1}BT(n-1)^{-\rho _{n-1}}}\arrow{e}\node{\Sigma ^{n-1}BT(n-1)^{-\gamma _1^{n-1}}}\arrow{e}
 \node{\Sigma ^{n-1}BO(n-1)^{-\gamma _{n-1}}}\\
\node{\Sigma ^{n-1}M(n-1)_{-2}}\arrow{e}\arrow{n,r}{j_{-2}}\arrow{s,r}{i_{-1}}\node{\Sigma ^{n-1}BT(n-1)^{-2\rho _{n-1}}}\arrow{e}\arrow{n}\arrow{s}\node{\Sigma ^{n-1}BT(n-1)^{-\gamma _1^{n-1}}}\arrow{e}\arrow{n,=,-}\arrow{s}
 \node{\Sigma ^{n-1}BO(n-1)^{-\gamma _{n-1}}}\arrow{n,=,-}\arrow{s}\\
\node{D(n)}\arrow{e}\node{\Sigma ^nBT(n)^{-\rho _{n}}}\arrow{e}\node{\Sigma ^nBT(n)^{-\gamma _1^{n}}}\arrow{e}
 \node{\Sigma ^n BO(n)^{-\gamma _{n}}}
\end{diagram}
$$
Everything except on the left commutes by naturality of Thom spectra construction.  The top left square commutes because
the map on the right is $GL_{n-1}(\Z/2)$-equivariant.  The bottom left square commutes because of the equality
$e_{n-1}e_n=e_n$ (\cite[Corollary 2.6 (2)]{KP1}, this also follows easily from \cite[Proposition 2.5]{MP}) and the fact that the
map on the right is $GL_{n-1}(\Z/2)$-equivariant.  By \cite[Theorem A]{Takayasu}, the map $i'_{-1}:D(n-1)\rightarrow D(n)$
satisfies the property $i'_{-1}\circ j_{-2}=i_{-1}$, so the conclusion follows.
A similar but easier argument applies to the map $D(n)\rightarrow \Sigma ^n M(n)$.
\end{proof}

\subsection{Proof of the splitting}\label{homologyofmaps}
So far, we have defined the maps $\alpha_n$ and $\beta _n$ so that the following diagram commutes.

$$
\begin{diagram}\node{S^0=D(0)}\arrow{e} \arrow{s,r}{\beta_0}            \node{D(1)}\arrow{e} \arrow{s,r}{\beta_1}                \node{\cdots}\arrow{e}\node{D(n)}\arrow{e}\arrow{s,r}{\beta_n}\node{\cdots}\\
\node{S^0=MTO(0)}\arrow{s,r}{\alpha_0}\arrow{e} \node{\Sigma MTO(1)}\arrow{s,r}{\alpha_1}\arrow{e}\node{\cdots}\arrow{e}\node{\Sigma^nMTO(n)}\arrow{s,r}{\alpha_n}\arrow{e}\node{\cdots}\\
\node{S^0=D(0)}\arrow{e}             \node{D(1)}\arrow{e}                 \node{\cdots}\arrow{e}\node{D(n)}\arrow{e}\node{\cdots}
\end{diagram}
$$
Now, consider $H^*(\alpha _n\circ \beta _n)$.  As $H^*(D(n))$ is monogenic over the Steenrod algebra \cite[Proposition 4.3]{MP},
this map is determined by its image on the bottom class.  However, the bottom cell of $D(n)$ is just the image of
$D(0)$, so the bottom class is detected by the pull-back to $H^0(D(0))$.  The commutativity of the above diagram then
implies that $H^0(\alpha _n\circ \beta _n)$ restricted to $H^0(D(0))$ is identity.  Thus $H^*(\alpha _n\circ \beta _n)$ is identity,
which concludes the proof of Theorem \ref{split}

\subsection{{Further refinements}}
We have shown in \cite{KZ1} that $BSO(2n+1)_+$ splits off $MTO(2n)$. One may ask how
this splitting ineracts with the spliting of current paper.  We show that they are complementary.
\begin{cor}\label{splitbsdn}
$\Sigma^{-2n}D(2n)\vee BSO(2n+1)_+$ splits off $MTO(2n)$.  When $n=1$, we have  homotopy equivalence
$MTO(2)\cong \Sigma^{-2}D(2)\vee BSO(3)_+$.
\end{cor}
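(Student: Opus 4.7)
The plan is to combine the splitting of Theorem \ref{split} with the splitting $BSO(2n+1)_+\mid MTO(2n)$ from \cite{KZ1} into a single wedge splitting. Write $\iota\co BSO(2n+1)_+\to MTO(2n)$ and $\rho\co MTO(2n)\to BSO(2n+1)_+$ for the inclusion and retraction of \cite{KZ1}, so $\rho\iota\simeq\mr{id}$. Using that a finite wedge of spectra is also a product, assemble the map $f\co\Sigma^{-2n}D(2n)\vee BSO(2n+1)_+\to MTO(2n)$ induced by $\beta_{2n}$ and $\iota$, and the map $g\co MTO(2n)\to\Sigma^{-2n}D(2n)\vee BSO(2n+1)_+$ induced by $\alpha_{2n}$ and $\rho$. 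I will show that $g\circ f$ is a self-equivalence of the wedge, which gives the desired splitting.

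Writing $g\circ f$ as a $2\times 2$ matrix of self-maps of the wedge, the diagonal entries $\alpha_{2n}\beta_{2n}$ and $\rho\iota$ are equivalences by construction, so it suffices to show that one off-diagonal entry is null in $\Z/2$-homology. For $\alpha_{2n}\iota\co BSO(2n+1)_+\to\Sigma^{-2n}D(2n)$, the induced map on cohomology is an $\mathcal{A}$-module map $H^*(\Sigma^{-2n}D(2n))\to H^*(BSO(2n+1)_+)$; since $H^*(D(n))\cong\mathcal{A}/G_n$ is cyclic over $\mathcal{A}$ \cite[Proposition 4.3]{MP}, this map is determined by the image of the bottom class in degree $-2n$. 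But $H^{-2n}(BSO(2n+1)_+)=0$, so this image must vanish and the whole map is zero; by the duality between mod $2$ homology and cohomology for finite-type spectra, $(\alpha_{2n}\iota)_*=0$ as well. Thus the matrix of $g\circ f$ is upper triangular with identities on the diagonal, so $g\circ f$ is a mod $2$ homology equivalence, and since the wedge is bounded below of finite type at the prime $2$, also a homotopy equivalence.

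For the case $n=1$, it remains to show that the complementary summand of the splitting is contractible, for which a Poincar\'e series comparison suffices. Thom's isomorphism gives $P(MTO(2);t)=t^{-2}/((1-t)(1-t^2))$; one has $P(BSO(3)_+;t)=1/((1-t^2)(1-t^3))$, and enumerating admissible Steenrod monomials of length $\leq 2$ yields $P(\Sigma^{-2}D(2);t)=t^{-2}/(1-t)+t/((1-t)(1-t^3))$. A routine rational-function calculation verifies $P(\Sigma^{-2}D(2);t)+P(BSO(3)_+;t)=P(MTO(2);t)$, so the split inclusion from the first part is itself a mod $2$ homology equivalence, hence a homotopy equivalence. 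The crux of the proof is the orthogonality of the two splittings, which via the cohomological argument above reduces neatly to the observation that $\Sigma^{-2n}D(2n)$ has bottom cell in degree $-2n$, below the connectivity of $BSO(2n+1)_+$.
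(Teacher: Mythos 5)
Your proof of the general splitting statement is correct and follows essentially the same route as the paper: assemble the two one-sided splittings into wedge maps $f$ and $g$, view $g\circ f$ as a $2\times 2$ matrix, and observe that one off-diagonal entry must vanish because $H^*(\Sigma^{-2n}D(2n))$ is cyclic over $\mathcal{A}$ on a class in degree $-2n$, while $H^*(BSO(2n+1)_+)$ is concentrated in non-negative degrees. (One minor imprecision: you first say the diagonal entries are equivalences, then later call them ``identities''; they need only be automorphisms in homology, which is all the triangularity argument requires, so this is cosmetic.) For the $n=1$ case, the paper mentions that ``it suffices to compare the cohomology of both sides'' and then details an alternative structural argument comparing the cofibration $MTO(2)\to BO(2)_+\to MTO(1)$ with $\Sigma^{-2}D(2)\to M(2)\to D(1)$ using the known splitting $BO(2)_+\simeq M(2)\vee BSO(3)_+$. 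You instead carry out the cohomology comparison explicitly via Poincar\'e series; your computation $P(\Sigma^{-2}D(2);t)+P(BSO(3)_+;t)=t^{-2}/((1-t)(1-t^2))$ is correct, and combined with the fact that the split inclusion is injective on cohomology, it does give the required mod $2$ equivalence. So you take the first option the paper only gestures at rather than its detailed second option; the trade-off is that the fibration comparison is more conceptual and reuses the map of cofibrations already in play, while the Poincar\'e series calculation is more elementary and self-contained but specific to $n=1$.
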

\begin{proof}
Denote by $f_{2n}$ the inclusion $O(2n)\subset SO(2n+1)$ given by $A\mapsto det(A)(A\oplus 1)$, and by
$Tr_{f_{2n}}$ the associated Miller-Mann-Mann transfer $BSO(2n+1)_+\lra MTO(2n)$,
Consider the composition
$$(\alpha _{2n}\vee Bf_{2n}\circ p_{2n})^* \circ (\beta _{2n}\vee Tr_{f_{2n}})^* :H^*(BSO(2n+1))\oplus H^*(\Sigma^{-2n}D(2n))\lra H^*(BSO(2n+1))\oplus H^*(\Sigma^{-2n}D(2n)).$$
The components $H^*(BSO(2n+1))\lra H^*(BSO(2n+1))$ and $H^*(\Sigma^{-2n}D(2n))\lra H^*(\Sigma^{-2n}D(2n))$ are automorphisms. Consider now the component
$H^*(\Sigma^{-2n}D(2n))\lra H^*(BSO(2n+1))$.  This is trivial since the source is generated over the Steenrod algebra by a
negative-degree elements, and the target is concentrated in non-negative degrees.  Thus the map
$(\alpha _{2n}\vee Bf_{2n}\circ p_{2n})^* \circ (\beta _{2n}\vee Tr_{f_{2n}})^*$ is an automorphism. This proves the splitting for general $n$.  When $n=1$,
it suffices to compare the cohomology of both sides, or alternatively, by comparing the fibrations
$MTO(2)\longrightarrow BO(2)_+\lra MTO(1)$ and $\Sigma ^{-2}D(2)\rightarrow M(2)\rightarrow D(1)$, noting that $BO(2)_+\cong M(2)\vee BSO(3)_+$ (c.f. \cite[Theorem C]{MPold}), we see that $(\alpha _{2n}\vee Bf_{2n}\circ p_{2n})^* $ induces mod $2$ homology equivalence.  Since everything in sight is of finite type, this implies that we have a $2$-local homotopy equivalence.
\end{proof}

\section{Homology of the associated infinite loop spaces}\label{sec:hom}
In this section, we discuss the consequences of our splitting theorem to the homology of associated infinite loop spaces.
\subsection{Exact sequences}
We start with generalities on summands of suspension spectra.
\begin{lmm}\label{homologysummand}
 Let $M$ be a spectrum such that $M$ splits of $\Sigma ^{\infty }X$ where $X$ is a space.  Denote $B$ a basis of
$\tilde{H}_*(M)$.  Then there are elements $s_x\in H_*(\Omega ^{\infty }M)$ such that
$$H_*(\Omega ^{\infty }M)=\Z/2[Q^I(s_x);excess(I)>|x|, I\mbox{ allowable}], \sigma ^{\infty} _*(s_x)=x,$$
where $Q^I$ denotes the Dyer-Lashof operation (\cite{Maybook}).
Suppose further that the splitting of $M$ is obtained by an idempotent of the form $f=\Sigma _if_i$, where $f_i$'s are self-maps
of the space $X$.
  Then we can choose $s_x$ in such a
way that in $H_*(QX)$ we have $s_x=x$ modulo decomposables in $H_*(QX)$, where we identify $H_*(X)$ with its image in
$H_*(QX)$ via the canonical map $X\rightarrow QX$, and $H_*(\Omega ^{\infty}X)$ with its image in $H_*(QX)$ via the splitting map.
\end{lmm}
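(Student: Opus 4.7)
The plan is to leverage the splitting of spectra to pass to a splitting of infinite loop spaces, and then invoke May's theorem. Since $M$ is a retract of $\Sigma^\infty X$ in the stable homotopy category, we have a stable decomposition $\Sigma^\infty X\simeq M\vee N$ for some complementary summand $N$. Applying $\Omega^\infty$ (which converts wedges of spectra into products of infinite loop spaces) yields $QX\simeq \Omega^\infty M\times\Omega^\infty N$, and hence a Hopf algebra isomorphism
\[
H_*(QX)\cong H_*(\Omega^\infty M)\otimes H_*(\Omega^\infty N)
\]
compatible with the Dyer--Lashof action, as every map in sight is an infinite loop map.

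To construct the classes, set $s_x:=(\Omega^\infty r)_*\iota_{X*}(x)$, where $r\co\Sigma^\infty X\to M$ is the retraction and $\iota_X\co X\to QX$ the unit of the adjunction. The triangle identity $\sigma^\infty\circ\Sigma^\infty\iota_X=\mathrm{id}$, combined with $r_*|_{\tilde H_*(M)}=\mathrm{id}$ (a direct consequence of the splitting), immediately yields $\sigma^\infty_*(s_x)=x$. By May's theorem \cite{Maybook}, $H_*(QX)$ is the polynomial algebra on classes $Q^I[y]$ with $y$ in a basis of $\tilde H_*(X)$, $I$ admissible and $\ex(I)>|y|$. Choosing a basis of $\tilde H_*(X)$ to be the disjoint union of a basis $B\subset\tilde H_*(M)$ (transported via the section) and a basis of $\tilde H_*(N)$, naturality of $Q^I$ under infinite loop maps gives $Q^Is_x=(\Omega^\infty r)_*(Q^I[x])\in H_*(\Omega^\infty M)$; a Poincar\'e series count via the Hopf algebra tensor factorization above then forces $H_*(\Omega^\infty M)$ to be precisely the polynomial algebra on these classes, with no extraneous relations. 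Verifying this last step---that the $Q^I s_x$ really do generate $H_*(\Omega^\infty M)$ freely as a polynomial algebra---is the main technical content of the lemma.

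For the second clause, suppose $f=\sum_i f_i$ with $f_i\co X\to X$ genuine self-maps. Take $s_x:=(\Omega^\infty f)_*\iota_{X*}(x)$, which lies in $H_*(\Omega^\infty M)$ since $\Omega^\infty f$ is the idempotent projector onto this summand. The key computation is that $\Omega^\infty f=\sum_i Qf_i$ in the H-space structure on $QX$, so the H-space sum of maps in homology must be evaluated via the iterated coproduct. Writing $\Delta[x]=[x]\otimes 1+1\otimes[x]+\sum[x']\otimes[x'']$, the primitive part yields $\sum_i(Qf_i)_*[x]=\sum_i[f_{i*}x]=[f_*x]=[x]$ (using $x\in\im f_*$), while the remainder is a sum of Pontryagin products in $H_*(QX)$ of Hurewicz classes of positive-degree elements, hence decomposable. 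Thus $s_x\equiv[x]$ modulo decomposables in $H_*(QX)$, as required.
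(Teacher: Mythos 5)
Your proof is correct and follows the same route as the paper's, which is exceedingly terse: the authors declare the first statement ``straightforward'' and omit its proof, and dispose of the second in one line, citing the congruence $H_*(\Omega^\infty f)\equiv\sum_iH_*(\Omega^\infty f_i)$ modulo decomposables. Your expansion of that one-liner --- unpacking the H-space sum via the coproduct on the Hurewicz class, isolating the primitive part to get $\sum_i[f_{i*}x]=[f_*x]=[x]$, and observing that the non-primitive terms contribute Pontryagin-decomposables --- is precisely the computation the authors had in mind. For the first statement your sketch is also sound: the Hopf algebra factorization $H_*(QX)\cong H_*(\Omega^\infty M)\otimes H_*(\Omega^\infty N)$, combined with May's description of $H_*(QX)$ and the Borel structure theorem, forces $H_*(\Omega^\infty M)$ to be polynomial, and a leading-term (weight-filtration) argument shows the $Q^Is_x$ are free generators; you are right to flag this as the only step needing care, and a word on why $(\Omega^\infty f)_*(Q^I[x])\equiv Q^I[x]$ modulo higher weight filtration would make the Poincar\'e series count airtight. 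No gap; just a more explicit write-up than the authors chose to give.
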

\begin{proof}
 The first statement is straightforward, so its proof is omitted. The second statement follows as we have
$$H_*(\Omega ^{\infty}(f))=H_*(\Omega ^{\infty}(\Sigma _i f_i))\equiv \Sigma _iH_*(\Omega ^{\infty}(f_i))$$ modulo
decomposables.
\end{proof}
Thus $H_*(\Omega ^{\infty}M(n))$'s are polynomial Hopf algebra.  As a matter of fact they are bipolynomial, but we will not
need this.  A nice feature of such Hopf algebras is that a short exact sequence involving only these Hopf algebras always splits
as algebras, thus inducing a short exact sequence of indecomposables.  Besides, the map showing up in such a short exact sequence
can be effectively studied by studying the induced map in the indecomposables, with little loss of information.  All these considerations
 lead to the following refinement of Proposition \ref{dnhomologyexact}
\begin{prop}\label{exactHopf}
 The following sequence of Hopf algebra is exact.  It gives  rise to an exact sequence of graded vector spaces after taking the module
of indecomposables.
$$\cdots \rightarrow H_*(\Omega ^{\infty}M(n))\rightarrow H_*(\Omega ^{\infty}M(n-1))\rightarrow \cdots
H_*(\Omega ^{\infty}M(2))\rightarrow H_*(\Omega ^{\infty}_0BZ/2_+)\rightarrow H_*(Q_0S^0)\rightarrow \Z/2$$
Furthermore the image of $H_*(\Omega ^{\infty}M(n))\rightarrow H_*(\Omega ^{\infty}M(n-1))$ is isomorphic to
$H_*(\Omega ^{\infty}_0D(n-1))$.
\end{prop}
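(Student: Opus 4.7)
Plan: The exactness of the sequence as Hopf algebras is already the content of Proposition \ref{dnhomologyexact}, so the only new claim is the exactness of the sequence of indecomposables. I would first break the long exact sequence into short exact sequences of connected Hopf algebras
\[
H_*(\Omega ^{\infty}_0 D(n)) \hookrightarrow H_*(\Omega ^{\infty} M(n)) \twoheadrightarrow H_*(\Omega ^{\infty}_0 D(n-1)),
\]
where the kernel on the left is identified with $H_*(\Omega ^{\infty}_0 D(n))$ using the image description in Proposition \ref{dnhomologyexact} together with exactness.

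Next I would pin down the algebra structure on each term. Lemma \ref{homologysummand}, applied to the Mitchell--Priddy splitting $M(n) \simeq e_n BT(n)_+$, implies that every middle term $H_*(\Omega ^{\infty} M(n))$ is a polynomial Hopf algebra over $\Z/2$ on explicit Dyer--Lashof generators. For the outer terms I would invoke Borel's structure theorem for connected commutative graded Hopf algebras over the perfect field $\Z/2$: a sub-Hopf-algebra of a polynomial Hopf algebra is again polynomial, because any monogenic truncated summand $\Z/2[y]/y^{2^r}$ would produce a nontrivial nilpotent element of the ambient polynomial algebra. Applied to the inclusion $H_*(\Omega ^{\infty}_0 D(n-1)) \hookrightarrow H_*(\Omega ^{\infty} M(n-1))$ coming from the image description in Proposition \ref{dnhomologyexact}, this shows that both outer terms of each short exact sequence are polynomial Hopf algebras.

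The splitting step is then routine: one chooses polynomial generators $\{y_\alpha\}$ of $H_*(\Omega ^{\infty}_0 D(n-1))$, lifts them to elements $\{\tilde y_\alpha\}$ in $H_*(\Omega ^{\infty} M(n))$, and uses the freeness of the polynomial algebra $\Z/2[\tilde y_\alpha]$ together with commutativity to construct an algebra isomorphism
\[
H_*(\Omega ^{\infty} M(n)) \cong H_*(\Omega ^{\infty}_0 D(n)) \otimes H_*(\Omega ^{\infty}_0 D(n-1)).
\]
Passing to indecomposables turns this into a short exact sequence of graded $\Z/2$-vector spaces, and splicing these together yields the asserted long exact sequence on indecomposables.

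The main obstacle I foresee is the polynomiality of the outer terms. Lemma \ref{homologysummand} does not apply to $\Sigma ^{-n}D(n)$ directly, since by Lemma \ref{splitting1} this spectrum is realized only as a summand of the Thom spectrum $BT(n)^{-\rho _n}$, not of a genuine suspension spectrum. The sub-Hopf-algebra inclusion furnished by Proposition \ref{dnhomologyexact} is what lets the Borel-structure argument bypass this issue, avoiding the need to exhibit $D(n)$ itself as a stable summand of a suspension spectrum.
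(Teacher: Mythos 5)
Your proposal is correct and follows essentially the same strategy as the paper: break the long sequence into short exact sequences of connected Hopf algebras, observe that all terms are polynomial so the short exact sequences split as algebras, and splice the resulting exact sequences of indecomposables. The paper's own proof of the Hopf-algebra exactness re-derives it directly from Theorem~\ref{whitehead} via the fibrations $D'(n)\to M(n)\to D'(n-1)$ (with a brief aside to sort out the $\pi_0$-components at the tail), while you simply cite Proposition~\ref{dnhomologyexact}; this is a cosmetic difference, as that proposition records the same fact. Where you genuinely add value is in spelling out why the outer terms $H_*(\Omega^\infty_0 D(n))$ are polynomial --- the paper says only ``everything in sight is polynomial'' --- and your observation that one cannot invoke Lemma~\ref{homologysummand} directly for $\Sigma^{-n}D(n)$ (it is a summand of a Thom spectrum, not a suspension spectrum, by Lemma~\ref{splitting1}), so one must instead use the Borel structure theorem together with the sub-Hopf-algebra inclusion into the polynomial $H_*(\Omega^\infty M(n))$, is a correct and worthwhile clarification of a point the paper leaves implicit.
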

\begin{proof}Denote by $D'(n)$ the fiber of the map $\Sigma
 ^{-n}D(n)\rightarrow \Sigma ^{-n}H\Z/2$ corresponding to the bottom class.
Then by theorem \ref{whitehead}, we see that
the fibration $D'(n)\rightarrow M(n)\rightarrow D'(n-1)$ is exact
Note that $\Omega ^{\infty}D'(n)\cong \Omega ^{\infty}\Sigma ^{-n}D(n)$ for $n\geq 1$, and $\Omega ^{\infty}D'(0)\cong
 Q_0S^0$.  By definiition a three-term exact sequence of spectra leads to a short exact sequence of Hopf algebras
by applying $H_*(\Omega ^{\infty}(-);\Z /2)$. Splicing together the short exact sequnce thus obtained,  we get
an exact sequence as in the statement of Proposition, except  the last entries which are
$$\cdots H_*(\Omega ^{\infty}BZ/2_+)\rightarrow H_*(QS^0)\rightarrow \Z/2[\Z/2] \rightarrow \Z/2.$$
Noting that we have $ H_*(\Omega ^{\infty}X_+)\cong H_*(\Omega ^{\infty}_0X_+)\otimes \Z/2[\Z]$ for connected $X$, in
particular $X=B\Z/2$ and $X=pt$, and that
$\Z/2[\Z]\rightarrow \Z/2[\Z]\rightarrow \Z/2[\Z/2]$ is exact, we see that the sequence in the Proposition is exact.  As everything
insight is polynomial, they remain exact after passing to indecomposables.
\end{proof}
\begin{rem}
 The $Gl_n(\Z/2)$ action on $BT(n)_+$ extends that on  $BT(n)$.  Thus it is easy to see from the definition of $e'_n$ (\cite{MP})
that we have $e'_nBT(n)=e'_nBT(n)_+$ for $n>1$.  Thus $\Omega ^{\infty }_0M(n)=\Omega ^{\infty }M(n)$ for $n>1$,
and the above exact sequence can be expressed entirely in terms of $\Omega ^{\infty }_0M(n)$'s.
\end{rem}



An immediate consequence is
\begin{cor}\label{mto2poly}
$H^*(\Omega ^{\infty}_0MTO(2))$ is a polynomial algebra.
\end{cor}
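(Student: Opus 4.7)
The plan is to use the splitting $MTO(2) \simeq \Sigma^{-2}D(2) \vee BSO(3)_+$ from Corollary \ref{splitbsdn}, which induces a tensor decomposition
$$H^*(\Omega^{\infty}_0 MTO(2)) \cong H^*(\Omega^{\infty}_0 \Sigma^{-2}D(2)) \otimes H^*(\Omega^{\infty}_0 BSO(3)_+),$$
reducing the task to showing that each tensor factor is polynomial. The $BSO(3)_+$ factor is polynomial because $H_*(Q_0 BSO(3)_+;\Z/2)$ is bipolynomial by the classical Kudo--Araki--Dyer--Lashof description of $H_*(QX)$ as a polynomial algebra on admissible Dyer--Lashof operations applied to a basis of $\tilde H_*(X)$.

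For the $\Sigma^{-2}D(2)$ factor, I would invoke Proposition \ref{exactHopf} at stage $n = 3$, which identifies $H_*(\Omega^{\infty}_0 \Sigma^{-2} D(2))$ with the image of the Hopf algebra map $H_*(\Omega^{\infty} M(3)) \to H_*(\Omega^{\infty} M(2))$. By Lemma \ref{homologysummand} (and the bipolynomiality remark following its proof), applied to the splittings $M(n) \simeq e_n BT(n)_+$ off suspension spectra, $H_*(\Omega^{\infty} M(n))$ is bipolynomial for $n = 2, 3$. Dualizing the surjection $H_*(\Omega^{\infty} M(3)) \twoheadrightarrow H_*(\Omega^{\infty}_0 \Sigma^{-2} D(2))$ realizes $H^*(\Omega^{\infty}_0 \Sigma^{-2} D(2))$ as a Hopf subalgebra of the polynomial Hopf algebra $H^*(\Omega^{\infty} M(3))$.

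The final step is the classical Borel-type observation that any Hopf subalgebra of a connected, commutative, polynomial Hopf algebra over $\Z/2$ is itself polynomial: by Borel's structure theorem it decomposes as a tensor product of factors $\Z/2[x]$ and $\Z/2[x]/(x^{2^n})$, and any truncated factor would produce a nilpotent element in the polynomial ambient, a contradiction. Applied to $H^*(\Omega^{\infty}_0 \Sigma^{-2} D(2)) \subset H^*(\Omega^{\infty} M(3))$, this yields the desired polynomiality, completing the proof. The only nontrivial ingredient is this Borel-style argument; the remainder is a direct assembly of the splitting result and Proposition \ref{exactHopf}.
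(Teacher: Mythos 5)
Your overall strategy matches the paper's: split $MTO(2)$ via Corollary~\ref{splitbsdn}, inject the $D(2)$-factor's cohomology into a polynomial Hopf algebra using the exact sequence of Proposition~\ref{exactHopf}, and then invoke the Borel/Milnor--Moore structure theorem to conclude that a connected Hopf subalgebra of a polynomial $\Z/2$-Hopf algebra is itself polynomial. That core idea is correct and is exactly what the paper does.

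However, there is a genuine gap in the way you establish polynomiality of the ambient Hopf algebras. You claim that $H_*(Q_0 BSO(3)_+;\Z/2)$ is \emph{bipolynomial} ``by the classical Kudo--Araki--Dyer--Lashof description,'' and similarly you lean on the (unproven) remark that $H_*(\Omega^\infty M(n))$ is bipolynomial to get polynomiality of $H^*(\Omega^\infty M(3))$. But the Kudo--Araki--Dyer--Lashof theorem only gives polynomiality of the \emph{homology} $H_*(QX;\Z/2)$; it says nothing about the dual. Over $\Z/2$ the dual of a polynomial Hopf algebra is not automatically polynomial --- for instance the dual of $\Z/2[x]$ on a primitive generator is a divided-power algebra $\Gamma[y] \cong \bigotimes_{i\geq 0}\Z/2[y_i]/(y_i^2)$, which is far from polynomial. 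So ``polynomial homology, hence polynomial cohomology'' is simply false as a general principle, and the polynomiality of $H^*(Q_0X)$ needs a separate input. The paper supplies this by citing Wellington's theorem \cite[Theorem 3.11]{W}, which directly asserts that $H^*(Q_0BO(3))$ (resp.\ $H^*(Q_0BSO(3))$) is polynomial; it then factors $H^*(\Omega^\infty D'(2)) \hookrightarrow H^*(\Omega^\infty M(3)) \hookrightarrow H^*(Q_0BO(3))$ via the stable splitting of $M(3)$ off $BO(3)_+$, and applies the Borel argument to the outer inclusion. If you replace your appeal to KADL bipolynomiality with a citation of Wellington's theorem (and route the $D(2)$-factor inclusion through $H^*(Q_0BO(3))$ rather than stopping at $H^*(\Omega^\infty M(3))$), your proof becomes identical to the paper's.
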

\begin{proof}
By Corollary \ref{splitbsdn} we have $\Omega ^{\infty}_0MTO(2)\cong Q_0BSO(3)_+\times \Omega ^{\infty }D'(2)$, noting
that $\pi _0(D'(2))=0$ since it is a direct factor of $\pi _0(M(2))$.  The short exactsequence above implies that
$H^*(\Omega ^{\infty }D'(2))$ injects to $H^*(\Omega ^{\infty }M(3))$.  Since $M(3)$ is a stable summand of $BO(3)$, we see
that $H^*(\Omega ^{\infty }D'(2))$ injects to $H^*(Q_0BO(3))$ which is polynomial (\cite[Theorem 3.11]{W}).  Since $H^*(\Omega ^{\infty }D'(2))$ is a conneced
Hopf algebra, by the structure theorem of Hopf algebras over $\Z/2$ (\cite[Theorem 6.1]{Bo53} or \cite[Theorem 7.11]{MM}), this implies
 that $H^*(\Omega ^{\infty }D'(2))$ itself is a polynomial algebra.  Now the Corollary follows as the other factor $H^*(Q_0BO(3))$ is
 polynomial again by \cite[Theorem 3.11]{W}.
\end{proof}



\subsection{Relations among $\mu$-classes}
As an application of the above, we now prove Theorem \ref{relfromdn}.
\begin{defn}
 Define the weight on elements of $H_*(QX)$ by $w(Q^Ix)=2^{l(I)}$ for $x\in Im(H_*(X)\rightarrow H_*(QX))$, and extend it
by $w(xy)=w(x)+w(y)$.  Denote by $W_i(H_*(QX))$ the set of weight $i$ elements.  The module of indecomposables $QH_*(QX)$
inherits
the weights from  $H_*(QX)$, and we have $QH_*(QX)\cong \oplus _iW_{2^i}QH_*(QX)$, one can define similary the weights on elements of $QH_*(\Omega ^{\infty}M(n))$ via the inclusion $QH_*(\Omega ^{\infty}M(n))\subset QH_*(QX)$.
   By the second statement of Lemma \ref{homologysummand}, $QH_*(\Omega ^{\infty}M(n))$ also admits a direct
sum decomposition $QH_*(\Omega ^{\infty}M(n))\cong  \oplus _iW_{2^i}QH_*(\Omega ^{\infty}M(n))$.
In each case, we say that the elements of $W_i(-)$ are homogeneous of weight $i$.
\end{defn}
With this definition, we can state:
\begin{lmm}
 \label{dnhomogeneous}
The map $QH_*(\Omega ^{\infty}d _k):QH_*(\Omega ^{\infty}M(k+1))\rightarrow QH_*(\Omega ^{\infty}M(k))$ sends
homogeneous elements to homogeneous elements, and multiplies the weight by $2$.
\end{lmm}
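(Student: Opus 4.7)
The plan is to reduce the weight-doubling claim to a geometric property of the Becker--Gottlieb transfer associated to the index-$2$ inclusion $T(k) \subset T(k+1)$.

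First, since $\Omega^\infty d_k$ is an infinite loop map, it commutes with the Dyer--Lashof operations: $(\Omega^\infty d_k)_*(Q^I s_x) = Q^I (\Omega^\infty d_k)_*(s_x)$. Because $Q^I$ multiplies weight by $2^{l(I)}$ and $QH_*(\Omega^\infty M(k+1))$ is generated by classes of the form $Q^I s_x$ with $s_x$ of weight $1$, it suffices to prove that each $s_x$ is sent to a weight-$2$ element of $QH_*(\Omega^\infty M(k))$; the Adem relations preserve weight, so the weight doubling on all higher-weight indecomposables then follows.

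Next, I would exploit the factorization
$$M(k+1) \stackrel{i}{\hookrightarrow} BT(k+1)_+ \stackrel{tr}{\longrightarrow} BT(k)_+ \stackrel{p}{\longrightarrow} M(k)$$
defining $d_k$, where $tr$ is the Becker--Gottlieb transfer for $T(k) \subset T(k+1)$. The maps $i$ and $p$ arise from the Steinberg idempotents $e'_{k+1}$ and $e'_k$, each a $\Z/2$-linear combination of elements of $Gl_*(\Z/2)$ acting as self-maps of $BT(\cdot)$. By the second statement of Lemma \ref{homologysummand}, such idempotent-induced maps preserve the weight grading on indecomposables: under $i_*$, the class $s_x$ corresponds to $x$ viewed as a weight-$1$ element of $\tilde H_*(BT(k+1)_+) \subset QH_*(Q(BT(k+1)_+))$, and $p_*$ preserves weights. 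The problem therefore reduces to showing that $(\Omega^\infty tr)_*$ sends each weight-$1$ class $x \in \tilde H_*(BT(k+1)_+)$ into weight $2$ in $QH_*(Q(BT(k)_+))$.

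For this, I would exploit that, in the model $BT(j) = ET(k+1)/T(j)$, the projection $BT(k) \to BT(k+1)$ is a genuine double cover and $tr$ its classical transfer. The composite
$$BT(k+1)_+ \hookrightarrow Q(BT(k+1)_+) \stackrel{\Omega^\infty tr}{\longrightarrow} Q(BT(k)_+)$$
factors through the second extended power $D_2(BT(k)_+) = (E\Sigma_2)_+ \wedge_{\Sigma_2} (BT(k)_+)^{\wedge 2}$, sending $z \in BT(k+1)$ to its two-point fibre $p^{-1}(z)$. Via the Snaith decomposition, the image of $H_*(D_2(BT(k)_+))$ in $QH_*(Q(BT(k)_+))$ lies in weight $2$, with generators of the form $Q^i y$ by Kudo's formula. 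The hard part is exactly this final step --- rigorously identifying $\Omega^\infty tr$ restricted to $BT(k+1)_+$ with the geometric two-point fibre map into $D_2(BT(k)_+)$, and matching its image with the weight-$2$ piece of the Snaith filtration using the standard description of the Dyer--Lashof operations on extended powers (compare the Kahn--Priddy theorem and Cohen--May--Taylor's analysis of transfers).
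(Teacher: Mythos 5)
The paper's ``proof'' of Lemma~\ref{dnhomogeneous} is a bare citation to \cite[Proposition~3.6]{KP1}, so you are in effect reproving the cited result. Your overall strategy---reduce the claim via the Dyer--Lashof/Adem compatibility to the weight-$1$ generators $s_x$, strip off the Steinberg idempotent maps (which are sums of unstable self-maps of $BT(\cdot)$ and hence weight-preserving on indecomposables by Lemma~\ref{homologysummand}), and then analyse the Becker--Gottlieb transfer for the index-$2$ inclusion $T(k)\subset T(k+1)$ geometrically via the two-point-fibre map into an extended power---is essentially the one used in Kuhn--Priddy. The reduction steps are sound.

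There is, however, a genuine gap in the final step, and it is not merely the technical bookkeeping you flag at the end. The geometric fibre map does \emph{not} factor through the Snaith summand $D_2(BT(k)_+)$ as you assert; it factors through the second filtration piece $C_2(BT(k))$ of the May/Milgram model, and stably $C_2(BT(k))_+$ splits as $D_0\vee D_1\vee D_2$ where $D_1=BT(k)_+$. Consequently the adjoint of the transfer a priori has nontrivial components in weights $0$, $1$, and $2$, not just weight $2$. The weight-$1$ component is exactly the map $H_*(tr)\co H_*(BT(k+1))\to H_*(BT(k))$ on stable homology, and if it were nonzero the lemma would simply be false. Thus the argument requires the explicit input that the mod~$2$ homology transfer for $T(k)\subset T(k+1)$ vanishes. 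This does hold---the double cover $BT(k)\to BT(k+1)$ is pulled back from $E\Z/2\to B\Z/2$ along the projection $T(k+1)\to T(1)$, and the transfer for $E\Z/2\to B\Z/2$ is null in mod~$2$ (co)homology---but you must say so, since it is precisely the point on which the weight-doubling lives or dies. Once that vanishing is supplied, the remainder of your outline (Kudo/Kahn--Priddy description of the $D_2$ contribution as $Q^i y$'s of weight $2$) completes the argument.
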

\begin{proof}
 This is immediate from \cite[Proposition 3.6]{KP1}.
\end{proof}
Define a {\it decreasing} filtration $F_i$ on $QH_*(\Omega ^{\infty }M(n))$ by
$F_iQH_*(\Omega ^{\infty }M(n))=\oplus _{j\geq i} W_j(QH_*(\Omega ^{\infty }M(n)))$.  Then it satisfies the following conditions.
\begin{enumerate}
 \item
$F_1(H_*(\Omega ^{\infty }M(n))=H_*(\Omega ^{\infty }M(n))$
\item if $x\in F_i$ then $Q^s(x)\in F_{2i}$
\end{enumerate}
Thus by naturality of the Dyer-Lashof operations, we see that any map of infinite loop spaces between $\Omega ^{\infty} M(n)$'s
induce filtration preserving map in homology, even though most of the time they don't send homogeneous elements to homogeneous
elements.  We also note that te homology suspension $\sigma ^{\infty }_*$ maps isomorphically $F_1/F_2$ to $H_*(M(n))$.  We will
show the following.
\begin{lmm}
 The map $H_*(\Omega ^{\infty }d _{n-1})$ induces an injection
$$H_*(M(n))\cong F_1/F_2(QH_*(\Omega ^{\infty}M(n)))\rightarrow  F_2/F_4(QH_*(\Omega ^{\infty}M(n-1))).$$
\end{lmm}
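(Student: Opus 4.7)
The plan is to extract the injectivity directly from the exactness of the Whitehead conjecture sequence (Theorem \ref{whitehead}) together with the weight-doubling property of $d_k$ recorded in Lemma \ref{dnhomogeneous}, bypassing any explicit formula for $H_*(\Omega^\infty d_{n-1})$ on weight-$1$ classes.

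First, I would specialise Proposition \ref{exactHopf} to the three-term portion $M(n+1) \xrightarrow{d_n} M(n) \xrightarrow{d_{n-1}} M(n-1)$ of the Whitehead sequence, obtaining an exact sequence of graded vector spaces
$$QH_*(\Omega^\infty M(n+1)) \xrightarrow{d_n} QH_*(\Omega^\infty M(n)) \xrightarrow{d_{n-1}} QH_*(\Omega^\infty M(n-1)).$$
By Lemma \ref{dnhomogeneous} both maps double weights, so in particular their images are contained in $\bigoplus_{k\geq 1}W_{2^k}(-)=F_2(-)$.

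Now let $s\in F_1/F_2=W_1\,QH_*(\Omega^\infty M(n))$, identified with $H_*(M(n))$ via $\sigma^{\infty}_*$, and suppose its image in $F_2/F_4$ vanishes. Since $d_{n-1}(s)$ is homogeneous of weight $2$, and $F_2/F_4$ is canonically identified with $W_2$, the vanishing of its class modulo $F_4$ forces $d_{n-1}(s)=0$ in $QH_*(\Omega^\infty M(n-1))$ itself. By exactness, $s$ lies in the image of $d_n$, which is contained in $F_2$. But the weight decomposition gives $W_1\cap F_2=0$, so $s=0$, proving the required injectivity.

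No serious obstacle arises: the only point of care is the passage from the Hopf-algebra exactness of Proposition \ref{exactHopf} to the exact sequence on indecomposables displayed above, but this is already handled in that proof, since each $H_*(\Omega^\infty M(k))$ is polynomial by Lemma \ref{homologysummand}, and hence short exact sequences of such Hopf algebras split as algebras and induce exactness after passing to $QH_*$.
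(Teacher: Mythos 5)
Your proof is correct and takes essentially the same route as the paper: both use the exactness on indecomposables from Proposition \ref{exactHopf}, the weight-doubling of Lemma \ref{dnhomogeneous}, and the observations $W_1\cap F_2=0$, $W_2\cap F_4=0$. The only cosmetic difference is that you chase an element hitting zero in $F_2/F_4$ directly, while the paper first establishes the injection $W_1\hookrightarrow F_2$ and then notes the image lies in $W_2$; these are the same argument.
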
\begin{proof} By Lemma \ref{dnhomogeneous} for $k=n-1$ we see that $QH_*(\Omega ^{\infty}d_{n-1})$ induces a map from
$F_1/F_2$ to $F_2/F_4$.   By applying  Lemma \ref{dnhomogeneous} to the case $k=n$, we
see that the image of $QH_*(\Omega ^{\infty}d_{n})$ is included in $F_2$. Alternatively,
one can see this by noting that the map $H_*(d_n)=0$.  Thus by the exactness of Proposition
\ref{dnhomologyexact}, $F_1/F_2(QH_*(\Omega ^{\infty }M(n))$ injects to $F_2(QH_*(\Omega ^{\infty }M(n-1))$.  But as we have
$F_1/F_2\cong W_1$ and by Lemma \ref{dnhomogeneous} $W_1$ maps to $W_2$,
we get the desired result.
\end{proof}

Now we are ready to prove Theorem \ref{relfromdn}.  The inclusion $H^*(M(n))\subset H^*(BO(n))$ is given by $H^*(f_n)$, and this is determined uniquely by its compatibility with $H^*(\alpha _n)$, which in turn is determined uniquely by the fact that $H^{-n}(MTO(n))$ contains only one non-trivial element, and the fact that $H^*(D(n))$ is generated by the bottom class as a module over the Steenrod algebra. The cofibration sequence  (\ref{cofibgtmw})
 imply that such a class vanishes if its preimage in $ H^*(QBO(n))$ belongs to the image of $H^*(\Omega _0^{\infty}MTO(n-1))$.
Now, Theorem
\ref{compatibility1} implies that we have a commutative diagram
$$\begin{diagram}
   \node{BO(n)}\arrow{e,t}{f_n}\arrow{s}\node{M(n)}\arrow{s}\arrow{se,t}{d_n}\\
\node{MTO(n-1)}\arrow{e} \node{\Sigma ^{1-n}D(n-1)}\arrow{e}\node{M(n-1)}
  \end{diagram}
$$
Thus we get
$$\begin{diagram}
   \node{H_*(Q_0BO(n)_+)}\arrow{e}\arrow{s}\node{H_*(\Omega ^{\infty }M(n))}\arrow{s}\arrow{se}\\
\node{H_*(\Omega ^{\infty }_0MTO(n-1))}\arrow{e} \node{H_*(\Omega ^{\infty }(\Sigma ^{1-n}D(n-1))}\arrow{e}
\node{H_*(\Omega ^{\infty }M(n-1))}
  \end{diagram}
$$

Dualizing Lemma above, we see that the space of functionals on $QH_*(\Omega ^{\infty }M(n-1))$ vanishing on $F_4$
surjects to the space of functionals on $QH_*(\Omega ^{\infty }M(n))$ vanishing on $F_2$, which is precisely the
image of  $\sigma ^{\infty *}$.  Thus by the commutativity of the diagram above, we see that the image of the composition
$PH^*(M(n))\stackrel{\sigma ^{\infty *}}{\rightarrow} PH^*(\Omega ^{\infty}M(n))\rightarrow PH^*(Q_0BO(n))$ is contained in
the image of $PH^*(\Omega ^{\infty }_0MTO(n-1))$.  This concludes the proof of i).  The other statements follow from
\cite[Theorem 1.8]{KZ1} and Corollary \ref{splitbsdn}.

\begin{rem}
 The formula in \cite[Corollary 4.4']{Raerr} involves a map that sends homogeneous elements
of weight 1 to a sum of homogeneous elements of weight 2 and 4.  The proof of Proposition 4.5' {\it loc. cit.} shows that the terms of
weight 4 can be ignored.  In the above argument, we show that actually one can use another map which are homogeneous.
\end{rem}


To conclude, we give some explicit examples of those relations.  First of all, we have \cite[Corollary 3.11]{MAn}
\begin{prop}
 The image of $H^*(M(n))$ in $H^*(BO(n))$ is the a free-module over $H^*(BT(n))^{Gl_n(\Z /2)}$ generated by
a basis of $A(n-2)Sq^{2^{n-1},\ldots ,2,1}(x_1^{-1}\cdots x_n^{-1})$ where $A(k)$ is the subalgebra of the Steenrod algebra generated by
$Sq^1, Sq^2,\ldots ,Sq^{2^{k}}$. Here we identify $H^*(BO(n))$ with its image in $$H^*(BT(n))\subset H^*(BT(n))^{-\gamma _n})\cong (x_1\cdots x_n)^{-1}H^*(BT(n))$$ via $Bi^*$ where $i:T(n)\subset O(n)$.
\end{prop}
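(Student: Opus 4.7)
The plan is to deduce this statement from the description of the Steinberg summand of $H^*(BT(n))$ given in \cite[Corollary 3.11]{MAn}, using the identifications already set up in this paper.

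First, I would combine the splitting $M(n)\simeq e_nBT(n)_+$ of \cite[Theorem 5.1]{MP} with the Becker-Gottlieb transfer $BO(n)_+\to BT(n)_+$ to realize $H^*(M(n))$ as a summand of $H^*(BO(n))$.  Composing with $Bi^*\co H^*(BO(n))\hookrightarrow H^*(BT(n))$ embeds this image in $H^*(BT(n))$, and the natural $Gl_n(\Z/2)$-action on $BT(n)$ makes the invariant subring coincide with the Dickson algebra $H^*(BT(n))^{Gl_n(\Z/2)}$, which is a polynomial ring on the classical Dickson generators.  This recasts the statement as a claim about the Steinberg-idempotent summand of $H^*(BT(n))$ as a module over the Dickson algebra.

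Next, the crucial structural input is that $e_nH^*(BT(n))$ is free over the Dickson algebra of the expected rank, with an explicit basis obtained by selecting a canonical top generator and letting the subalgebra $A(n-2)\subset \mathcal{A}$ act on it.  In the localization $(x_1\cdots x_n)^{-1}H^*(BT(n))\cong H^*(BT(n)^{-\gamma_1^{\times n}})$, the element $Sq^{2^{n-1},\ldots ,2,1}(x_1^{-1}\cdots x_n^{-1})$ sits in the appropriate degree, and the cited corollary shows that it generates a free Dickson-module summand, with the $A(n-2)$-orbit realizing the claimed basis.

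The main obstacle, beyond the matching of conventions between \cite{MAn} and the present notation (in particular, the sign conventions for Thom classes and the choice of conjugate versus direct Steinberg idempotent), is the verification that $\dim_{\F_2}\bigl(A(n-2)\cdot Sq^{2^{n-1},\ldots ,2,1}(x_1^{-1}\cdots x_n^{-1})\bigr)$ matches the rank of $e_nH^*(BT(n))$ as a free module over the Dickson algebra.  This rank is governed by the Steinberg module $St_n$ of $Gl_n(\F_2)$, and the equality is the content of \cite[Corollary 3.11]{MAn}, from which the proposition then follows by direct translation of notation.
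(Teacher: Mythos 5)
The paper provides no proof of this proposition: it is quoted directly as Corollary~3.11 of Mitchell's ``Finite complexes with $A(n)$-free cohomology,'' with no independent argument given beyond setting up the identification of $H^*(BO(n))$ inside $(x_1\cdots x_n)^{-1}H^*(BT(n))$. Your proposal ultimately does the same thing---reduce to the description of the Steinberg summand of $H^*(BT(n))$ as a free Dickson-algebra module and invoke Mitchell's corollary for the $A(n-2)$-generated basis---so it takes essentially the same approach as the paper.
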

When $n=2$, $A(0)$ is just the exterior algebra generated by $Sq^1$, and $Sq^{2,1}(x_1^{-1}x_2^{-1})=x_1+x_2=\sigma _1$,
$Sq^1(\sigma ^1)=x_1^2+x_2^2=\sigma _1^2$, whereas the Dickson invariants $H^*(BT(n))^{Gl_n(\Z /2)}$ is generated by
$w_2=x_1^2+x_1x_2+x_2^2=\sigma _1^2+\sigma _2$, $w_3=x_1x_2(x_1+x_2)=\sigma _1\sigma _2$, we derive
\begin{cor}
 The set $$\{(\sigma _1^2+\sigma _2)^i(\sigma _1\sigma _j)^j\sigma _1^{\epsilon};i\geq 0, j\geq 0,\epsilon \in \{1,2\}\}$$
forms a basis of the image of $H^*(M(2))$ in $H^*(BO(2))$.
\end{cor}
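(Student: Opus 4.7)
The plan is to specialize the preceding proposition to $n=2$ and write out the resulting basis in closed form. By that proposition, the image of $H^*(M(2))$ inside $H^*(BO(2))$ is a free module over the Dickson invariant ring $H^*(BT(2))^{Gl_2(\Z/2)}=\Z/2[w_2,w_3]$, generated by any $\Z/2$-basis of the $A(0)$-submodule $A(0)\cdot Sq^{2,1}(x_1^{-1}x_2^{-1})$. So the whole task reduces to identifying a basis of this $A(0)$-submodule, after which the claimed basis is obtained simply by multiplying with the monomial basis of $\Z/2[w_2,w_3]$.

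First I would use that $A(0)$ is the exterior algebra $\Lambda(Sq^1)$ over $\Z/2$, with two-element basis $\{1,Sq^1\}$. Applying these to $Sq^{2,1}(x_1^{-1}x_2^{-1})=\sigma_1$ (already recorded before the corollary), and using $Sq^1(\sigma_1)=\sigma_1^2$, I obtain that the $A(0)$-submodule is spanned by $\sigma_1$ and $\sigma_1^2$. Next I would verify that $\{\sigma_1,\sigma_1^2\}$ is actually a $\Z/2$-basis rather than merely a spanning set: this is immediate from degree, since the two elements sit in different degrees $1$ and $2$, and no further classes arise because $Sq^1Sq^1=0$. Identifying $w_2=\sigma_1^2+\sigma_2$ and $w_3=\sigma_1\sigma_2$ gives the monomial basis $w_2^i w_3^j=(\sigma_1^2+\sigma_2)^i(\sigma_1\sigma_2)^j$ of the Dickson ring.

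Putting the pieces together, freeness of the module structure in the proposition implies that every element of the image of $H^*(M(2))$ in $H^*(BO(2))$ is uniquely a $\Z/2[w_2,w_3]$-linear combination of $\sigma_1$ and $\sigma_1^2$, so the set $\{(\sigma_1^2+\sigma_2)^i(\sigma_1\sigma_2)^j\sigma_1^{\epsilon}:i,j\geq 0,\,\epsilon\in\{1,2\}\}$ is indeed a $\Z/2$-basis. There is essentially no obstacle here: the only substantive check is the degree argument showing that $\sigma_1$ and $\sigma_1^2$ are $\Z/2$-independent over $A(0)$, and everything else is a direct substitution into the preceding proposition.
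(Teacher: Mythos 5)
Your argument is correct and follows the same route as the paper: specialize the preceding proposition to $n=2$, compute that $A(0)\cdot Sq^{2,1}(x_1^{-1}x_2^{-1})$ has basis $\{\sigma_1,\sigma_1^2\}$ (the paper records exactly $Sq^{2,1}(x_1^{-1}x_2^{-1})=\sigma_1$ and $Sq^1\sigma_1=\sigma_1^2$), identify $w_2=\sigma_1^2+\sigma_2$ and $w_3=\sigma_1\sigma_2$, and conclude by freeness over the Dickson ring. You also implicitly correct the paper's typo $(\sigma_1\sigma_j)^j$ to $(\sigma_1\sigma_2)^j$, and your extra degree-based check of linear independence is harmless.
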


Below is a table of these relations in low dimensions.
\begin{eqnarray*}
 \mu _{1,0} & = &0\\
\mu _{3,0}+ \mu _{1,1} & = & 0\\
\mu _{2,1} & =& 0\\
\mu _{5,0}+ \mu _{3,1} +\mu _{1,2} & = & 0\\
\mu _{3,1} & = & 0\\
\mu _{4,1} +\mu _{2,2} & = & 0
\end{eqnarray*}
Here we have omitted the relations that follow from lower degree relations and the general relation $\mu _{2i,2j}
=\mu _{i,j}^2$.

\end{document}